\tikzstyle{v}=[circle, draw, fill=white,
\tikzset{>=stealth}
\newtheorem{thm}{Theorem}[section]
\newtheorem{prop}[thm]{Proposition}
\newtheorem{cor}[thm]{Corollary}
\theoremstyle{definition}
\newtheorem*{defn}{Definition}
\newtheorem{ex}[thm]{Example}
\theoremstyle{remark}
\newtheorem{rmk}[thm]{Remark}
\newcommand{\FK}{\mathcal E}
\newcommand{\CC}{\mathbf C}
\newcommand{\del}{\partial}
\newcommand{\ot}{\otimes}
\newcommand{\id}{\mathrm{id}}
\newcommand{\Sbar}{\overline{S}}
\newcommand{\taubar}{\overline{\tau}}
\renewcommand{\SS}{\mathfrak S}
\renewcommand{\subset}{\subseteq}
\begin{document}
\title{Positive expressions for skew divided difference operators}
\author{Ricky Ini Liu}
\address{Department of Mathematics, North Carolina State University, Raleigh, NC }
\email{riliu@ncsu.edu}

\begin{abstract}
	For permutations $v,w \in \SS_n$, Macdonald defines the skew divided difference operators $\del_{w/v}$ as the unique linear operators satisfying $\del_w(PQ) = \sum_v v(\del_{w/v}P) \cdot \del_vQ$ for all polynomials $P$ and $Q$. We prove that $\del_{w/v}$ has a positive expression in terms of divided difference operators $\del_{ij}$ for $i<j$. In fact, we prove that the analogous result holds in the Fomin-Kirillov algebra $\mathcal E_n$, which settles a conjecture of Kirillov.
\end{abstract}

\maketitle

\section{Introduction}

The divided difference operators $\del_{ij}$ acting on $\CC[x_1, \dots, x_n]$ are vital in the study of Schubert calculus. Their main purpose is to define Schubert polynomials, which serve as polynomial representatives of Schubert classes in the cohomology ring of the flag variety. Finding a combinatorial formula for the structure constants $c_{uv}^w$ of Schubert polynomials is a long outstanding problem in algebraic combinatorics.

Macdonald \cite{Macdonald} defined for any permutations $v$ and $w$ a skew divided difference operator $\del_{w/v}$ such that applying $\del_{w/v}$ to the Schubert polynomial of a permutation $u$ with $\ell(u) + \ell(v) = \ell(w)$ gives the structure constant $c_{uv}^w$. In \cite{Kirillov}, Kirillov conjectures that $\del_{w/v}$ can be written as a polynomial in $\del_{ij}$ for $i<j$ with positive coefficients. The main result of this paper is to prove this conjecture.

In fact, Kirillov conjectures a slightly more general result. The divided difference operators give a representation of a larger algebra $\FK_n$ introduced by Fomin and Kirillov \cite{FK}. Kirillov then conjectures that the element of the Fomin-Kirillov algebra corresponding to $\del_{w/v}$ has a positive expression in terms of generators $x_{ij} \in \FK_n$ for $i<j$. This form of positivity in $\FK_n$ is notable due to the nonnegativity conjecture in \cite{FK}, which states that certain elements of $\FK_n$ (namely evaluations of Schubert polynomials at Dunkl elements) have such a positive expression. A proof of this nonnegativity conjecture together with an explicit positive expression for such elements would immediately give a combinatorial formula for the structure constants $c_{uv}^w$.

The Fomin-Kirillov algebra also has the structure of a braided Hopf algebra as noted in \cite{FP, MS}. This added structure (which does not exist in full for the quotient algebra generated by the divided difference operators) will be key in proving our main theorem.

We begin with some preliminaries about the symmetric group, divided difference operators, and the braided Hopf algebra structure of the Fomin-Kirillov algebra $\FK_n$ in Section 2. We then prove the main result in Theorem~\ref{thm-main} of Section 3, giving a positive explicit formula for $\del_{w/v}$ in Corollary~\ref{cor-explicit} and a positive recursive formula in Corollary~\ref{cor-recurrence}.

\section{Preliminaries}

In this section, we give some notation and background, and we prove some basic facts about divided difference operators and the Fomin-Kirillov algebra. For more information, see, for instance, \cite{Humphreys, Kirillov, Macdonald}.

\subsection{Symmetric group}

Let $\SS_n$ be the symmetric group on $n$ letters. We will write $s_{ij}$ for the transposition switching $i$ and $j$, and we will abbreviate the simple transposition $s_{i,i+1}$ by $s_i$.

Given an element $w \in \SS_n$, a \emph{reduced expression} for $w$ is a factorization of $w$ into simple transpositions $s_{i_1} \dotsm s_{i_\ell}$ of minimum length $\ell=\ell(w)$. Any two reduced expressions for $w$ can be obtained from one another by commuting $s_i$ and $s_j$ when $|i-j|>1$ or by applying \emph{braid moves} replacing $s_is_js_i$ with $s_js_is_j$ when $|i-j|=1$.

We say $w = u \cdot v$ is a \emph{reduced factorization} if $\ell(w) = \ell(u)+\ell(v)$. We also denote the longest element of $\SS_n$ by $w_0$, so $\ell(w_0) = \binom{n}{2}$.

If some (equivalently, any) reduced expression for $w$ contains a subsequence that is a reduced expression for $v$, we say that $v<w$ in \emph{Bruhat order}. Equivalently, $v$ is covered by $w$ in Bruhat order, written $v \lessdot w$, if $\ell(v) = \ell(w)-1$ and $v = ws_{ij}$ for some transposition $s_{ij}$.

\subsection{Divided difference operators}

Define the left action of $\SS_n$ on $\CC[x_1, \dots, x_n]$ by \[(wP)(x_1, \dots, x_n) = P(x_{w(1)}, \dots, x_{w(n)}).\]
The \emph{divided difference operator} $\del_{ij}$ is then defined by
\[\del_{ij}P = \frac{P-s_{ij}P}{x_i-x_j}\]
for distinct $i$ and $j$. We abbreviate $\del_{i,i+1}$ by $\del_i$.

The following proposition describes some simple but important properties of the divided difference operators.
\begin{prop} \label{prop-relations}
	The divided difference operators satisfy, for distinct $i$, $j$, $k$ and $l$:
	\begin{enumerate}[(a)]
		\item $\del_{ij} = -\del_{ji}$;
		\item $\del_{ij}^2 = 0$;
		\item $\del_{ij}\del_{kl} = \del_{kl}\del_{ij}$;
		\item $\del_{ij}\del_{jk} + \del_{jk}\del_{ki} + \del_{ki}\del_{ij} = 0$;
		\item $\del_{ij}\del_{jk}\del_{ij} = \del_{jk}\del_{ij}\del_{jk} = \del_{ij}\del_{ik}\del_{jk} = \del_{jk}\del_{ik}\del_{ij}$;
		\item $\del_{ij}(PQ) = \del_{ij}P \cdot Q + s_{ij}P \cdot \del_{ij}Q$; and 
		\item $\del_{ij}w = w\del_{w^{-1}(i)w^{-1}(j)}$ for all $w \in \SS_n$.
	\end{enumerate}
\end{prop}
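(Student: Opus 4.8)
The plan is to prove each identity (a) through (g) by direct computation from the definition $\del_{ij}P = (P - s_{ij}P)/(x_i - x_j)$, treating the operators as acting on $\CC[x_1,\dots,x_n]$ and verifying equality as operators by applying both sides to an arbitrary polynomial $P$. Parts (a), (b), (f), and (g) are essentially formal manipulations: (a) follows because swapping the roles of $i$ and $j$ negates the denominator $x_i - x_j$ while leaving $P - s_{ij}P = -(s_{ij}P - P)$ and $s_{ij} = s_{ji}$; (b) follows since $\del_{ij}P$ is symmetric under $s_{ij}$ (as $s_{ij}(\del_{ij}P) = \del_{ij}P$, because $s_{ij}$ fixes the ratio), and applying $\del_{ij}$ to anything $s_{ij}$-invariant gives zero; (f) is the Leibniz-type rule obtained by writing $\del_{ij}(PQ) = (PQ - s_{ij}(PQ))/(x_i-x_j)$ and inserting $\pm s_{ij}P \cdot Q$ to split the numerator; and (g) is a change-of-variables computation, since conjugating $\del_{ij}$ by $w$ permutes the variables and sends the transposition $s_{ij}$ to $s_{w^{-1}(i),w^{-1}(j)}$.

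The more substantive identities are (c), (d), and (e), which encode the commutation, three-term, and braid relations. For (c), when $i,j,k,l$ are all distinct the transpositions $s_{ij}$ and $s_{kl}$ commute and act on disjoint variables, so the two operators commute — I would verify this by expanding $\del_{ij}\del_{kl}P$ and $\del_{kl}\del_{ij}P$ and observing the numerators and denominators agree after using $s_{ij}s_{kl} = s_{kl}s_{ij}$. For the three-term relation (d), I would expand each of $\del_{ij}\del_{jk}$, $\del_{jk}\del_{ki}$, $\del_{ki}\del_{ij}$ applied to $P$ over the common structure of the three variables $x_i, x_j, x_k$, collect terms over a common denominator such as $(x_i-x_j)(x_j-x_k)(x_k-x_i)$, and check that the six signed terms cancel; the key is tracking how each composite operator permutes the three variables.

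The main obstacle will be the braid relation (e), asserting the equality of four distinct length-three words in the $\del$'s. The cleanest route is to show all four expressions equal a single canonical operator by verifying they produce the same result on an arbitrary polynomial. Rather than brute-forcing each of the four directly, I would establish two of the equalities using (d): for instance, substituting the three-term relation to rewrite $\del_{ik}$ in terms of $\del_{ij}$ and $\del_{jk}$ (and their products with appropriate operators), thereby reducing the expressions involving $\del_{ik}$ to expressions in $\del_{ij}$ and $\del_{jk}$ only. The equality $\del_{ij}\del_{jk}\del_{ij} = \del_{jk}\del_{ij}\del_{jk}$ is the standard braid relation for adjacent divided differences and can be checked directly on the three relevant variables; the remaining two equalities would then follow by combining this with (d) and (a)--(c).

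I expect the braid relation (e) to be the step requiring the most care, both because it involves four expressions rather than two and because the denominators in the threefold compositions are the most intricate. The most efficient strategy is to lean on the already-established relations (a)--(d) to reduce (e) to a single manageable computation rather than verifying all four words independently from the definition.
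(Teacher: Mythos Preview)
Your proposal is correct and matches the paper's approach: the paper's entire proof is the single line ``Straightforward computation,'' and your plan is exactly the kind of direct verification that phrase abbreviates. Your observation that (e) can be reduced to a single braid check using (a)--(d) is also noted in the paper immediately after the proposition.
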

\begin{proof}
	Straightforward computation.
\end{proof}

Proposition~\ref{prop-relations}(e) follows from (a), (b), and (d). The first equality is the \emph{braid relation}, while the last equality is sometimes called the \emph{Yang-Baxter equation}.

The divided difference operators satisfy other relations that are not implied by those in Proposition~\ref{prop-relations}, but we will not need them here. However, the only relations between the simple divided difference operators $\del_i$ are (b), (c), and the braid relation in (e). These relations define the \emph{nil-Coxeter algebra} of $\SS_n$. Thus $\del_1, \dots, \del_{n-1}$ generate a faithful representation of the nil-Coxeter algebra (see \cite{FS}). Given a permutation $w \in \SS_n$, let $w = s_{i_1}\dotsm s_{i_\ell}$ be a reduced expression, and define $\del_w = \del_{i_1}\dotsm \del_{i_\ell}$. Then $\del_w$ does not depend on the choice of reduced expression for $w$. For any two permutations $v, w \in \SS_n$, $\del_v\del_w = \del_{vw}$ if $\ell(vw) = \ell(v)+\ell(w)$ and 0 otherwise.

By applying Proposition~\ref{prop-relations}(f) repeatedly to $\del_w(PQ) = \del_{i_1}\dotsm \del_{i_\ell}(PQ)$, we can express $\del_w(PQ)$ as
\[\del_w(PQ) = \sum_{v \in \SS_n} v(\del_{w/v} P) \cdot \del_v Q\]
for some linear operators $\del_{w/v}$. These operators are called the \emph{skew divided difference operators}. The operator $\del_{w/v}$ reduces the degree of a polynomial by $\ell(w)-\ell(v)$.

\begin{ex} \label{ex-skew}
	Let $w = s_1s_2 \in \SS_3$. Then
	\begin{align*}
		\del_w(PQ) &= \del_1\del_2(PQ)\\
		&= \del_1(\del_2P\cdot Q + s_2P\cdot\del_2Q)\\
		&= \del_1\del_2P\cdot Q + s_1\del_2P\cdot\del_1Q + \del_1s_2P\cdot\del_2Q+s_1s_2P\cdot\del_1\del_2Q\\
		&= \del_1\del_2P\cdot Q + s_1\del_2P\cdot\del_1Q + s_2\del_{13}P\cdot\del_2Q+s_1s_2P\cdot\del_1\del_2Q.
	\end{align*}
	Hence $\del_{w/id} = \del_1\del_2$, $\del_{w/s_1} = \del_2$, $\del_{w/s_2} = \del_{13}$, and $\del_{w/w} = 1$.
\end{ex}

Alternatively, $\del_{w/v}$ can be calculated as follows. Again consider any reduced expression $w = s_{i_1}\dots s_{i_\ell}$. For any subset $J \subset \{1, \dots, \ell\}$, let $\varphi_J = \prod_{j=1}^\ell \varphi_j(J)$, where $\varphi_j(J) = s_{i_j}$ if $j \in J$ and $\del_{i_j}$ if $j \not \in J$. Then
\begin{equation*}\label{eq-del}
\del_{w/v} = v^{-1}\sum_J \varphi_J,
\end{equation*}
where $J$ ranges over all subsets for which the product $\prod_{j \in J} s_{i_j}$ is a reduced expression for $v$. Note that this immediately implies that $\del_{w/v}=0$ unless $v<w$ in Bruhat order.

By using Proposition~\ref{prop-relations}(g) to collect the transpositions in $\varphi_J$ to the left, we will always be able to write $\del_{w/v}$ as a polynomial in the divided difference operators $\del_{ij}$.

\begin{ex} \label{ex-skew2}
	Let $w = s_2s_1s_3s_2 \in \SS_4$, and let $v=s_2$. Then the only possibilities for $J$ that yield a reduced expression for $v$ are $J=\{1\}$ and $J=\{4\}$. Thus
	\begin{align*}
	\del_{w/v} &= v^{-1}(s_2\del_{12}\del_{34}\del_{23}+\del_{23}\del_{12}\del_{34}s_2)\\
	&= s_2(s_2\del_{12}\del_{34}\del_{23} + s_2\del_{32}\del_{13}\del_{24})\\
	&= \del_{12}\del_{34}\del_{23} - \del_{23}\del_{13}\del_{24}. 
	\end{align*}
	Note that while this expression has a negative sign in it, we can use Proposition~\ref{prop-relations}(d) to rewrite it as
	\begin{align*}
	\del_{12}\del_{34}\del_{23}-\del_{23}\del_{13}\del_{24}
	&= \del_{12}(\del_{23}\del_{24} + \del_{24}\del_{34}) - (\del_{12}\del_{23} - \del_{13}\del_{12})\del_{24}\\
	&= \del_{12}\del_{24}\del_{34} + \del_{13}\del_{12}\del_{24}.
	\end{align*}
\end{ex}

This example shows that na\"ive evaluation of $\del_{w/v}$ will generally not give an expression in $\del_{ij}$ with $i<j$ that has positive coefficients. The main result of this paper will be to prove the following theorem, which states that such a positive expression always exists. It will follow as an immediate consequence of Theorem~\ref{thm-main} below.
\begin{thm} \label{thm-maindd}
	For any $v,w \in \SS_n$, the skew divided difference operator $\del_{w/v}$ can be written as a polynomial with nonnegative coefficients in the operators $\del_{ij}$ for $i<j$.
\end{thm}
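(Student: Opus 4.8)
The plan is to lift the statement into the Fomin--Kirillov algebra $\FK_n$ and to read off $\del_{w/v}$ from a coproduct, so that positivity becomes a structural statement. The twisted Leibniz rule in Proposition~\ref{prop-relations}(f) says precisely that each $\del_{ij}$ is skew-primitive: in the smash product $\CC[\SS_n]\ltimes\FK_n$ one has $\Delta(x_{ij}) = x_{ij}\ot 1 + s_{ij}\ot x_{ij}$, and passing to the braided Hopf algebra $\FK_n$ the generators become genuinely primitive, $\underline\Delta(x_{ij}) = x_{ij}\ot 1 + 1\ot x_{ij}$, with braiding $c(x_{ij}\ot x_{kl}) = x_{s_{ij}(k)s_{ij}(l)}\ot x_{ij}$. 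Fixing a reduced word $w = s_{i_1}\dotsm s_{i_\ell}$ and setting $x_w = x_{i_1}\dotsm x_{i_\ell}$, I would first verify that the identity $\del_w(PQ) = \sum_v v(\del_{w/v}P)\cdot\del_v Q$ is exactly the assertion $\underline\Delta(x_w) = \sum_v x_{w/v}\ot x_v$ under the representation $x_{ij}\mapsto\del_{ij}$, where $x_{w/v}$ lifts $\del_{w/v}$. Thus Theorem~\ref{thm-maindd} reduces to showing that each coproduct component $x_{w/v}$ is a nonnegative polynomial in the $x_{ij}$ with $i<j$ (the content of the promised Theorem~\ref{thm-main}).

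Since $\underline\Delta$ is an algebra homomorphism into the braided tensor square and each $x_i$ is primitive, I would compute $\underline\Delta(x_w) = \prod_j (x_{i_j}\ot 1 + 1\ot x_{i_j})$ by expanding over subsets $J$, exactly paralleling the formula $\del_{w/v} = v^{-1}\sum_J \varphi_J$. Peeling off one generator, say $w = s_i w'$, yields a recursion for the components,
\[
x_{w/u} = x_i\cdot x_{w'/u} + [\,s_i u\lessdot u\,]\;\bigl(s_i\cdot x_{w'/s_i u}\bigr),
\]
where $s_i\cdot(-)$ is the induced $\SS_n$-action extended as an algebra automorphism. The first summand is manifestly nonnegative by induction, since left multiplication by the positive generator $x_i = x_{i,i+1}$ preserves the cone of nonnegative polynomials in the $x_{kl}$ with $k<l$.

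The entire difficulty is concentrated in the second summand. The action $s_i\cdot(-)$ sends $x_{kl}\mapsto x_{s_i(k)s_i(l)}$, which remains a positive generator in every case except $x_{i,i+1}\mapsto x_{i+1,i} = -x_{i,i+1}$; hence applying $s_i$ to a monomial multiplies it by $(-1)^m$, where $m$ is the number of factors equal to $x_{i,i+1}$. So the naive recursion is not positive --- precisely the phenomenon illustrated by the spurious sign in Example~\ref{ex-skew2} that had to be removed using the three-term relation of Proposition~\ref{prop-relations}(d). The heart of the proof is therefore to show that these negative contributions always cancel: I would look for a sign-reversing involution on the monomials of $s_i\cdot x_{w'/s_i u}$ carrying an odd number of $x_{i,i+1}$'s, pairing each with a canceling monomial (either within the same term or against $x_i\cdot x_{w'/u}$), implemented concretely through the relations in Proposition~\ref{prop-relations}(d)--(e). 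Coassociativity of $\underline\Delta$ should be the structural input forcing these pairings to be consistent and exhaustive, ultimately repackaging the recursion into a manifestly positive one (the source of the positive recurrence of Corollary~\ref{cor-recurrence}).

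I expect this cancellation/rewriting step to be the main obstacle, since it is exactly where the braided Hopf structure of $\FK_n$ --- unavailable in the divided-difference quotient --- does the essential work. Two subsidiary points also need care. First, because $\FK_n$ does not satisfy the nil-Coxeter relations, $x_w$ depends a priori on the chosen reduced word, so I would either prove the positive expression for a canonical word and then descend to the well-defined operator $\del_w$ in the representation, or show directly that the positive form is reduced-word independent. Second, I must track that the grouplike twist relating $\Delta$ on the smash product to the braided $\underline\Delta$ accounts exactly for the factor $v$ in $v(\del_{w/v}P)$, so that the braided component is genuinely $\del_{w/v}$ rather than a conjugate of it.
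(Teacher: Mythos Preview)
Your setup is right: lifting to $\FK_n$ and reading $x_{w/v}$ off the braided coproduct of $x_w$ is exactly how the paper frames the problem (Proposition~\ref{prop-skew}; note the paper's convention is $\Delta(x_w)=\sum_v x_v\ot x_{w/v}$, the opposite tensor order from yours). Your diagnosis of the difficulty is also correct: the naive left-peel recursion is not manifestly positive because the $\SS_n$-action can flip the sign of $x_{i,i+1}$.

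But the proposal has a genuine gap at precisely that point. You say you would ``look for a sign-reversing involution'' and that ``coassociativity should be the structural input forcing these pairings to be consistent,'' but no such involution is produced, and there is no indication of what it would look like or why coassociativity forces it. This is the entire content of the theorem, and the proposal leaves it as an aspiration. The paper does \emph{not} proceed by any cancellation argument of this kind.

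Instead, the paper bypasses the sign problem completely by routing through $w_0$ and the modified antipode $\Sbar$. The key identity is
\[
x_{w/v}=\Delta_{v^{-1}}(x_{w_0})\nabla_{w_0w}=(\Sbar(x_{w_0v}))\nabla_{w_0w},
\]
obtained from $x_{w_0}=\Sbar(x_{w_0})$ (Proposition~\ref{prop-w0}) together with $\Delta_{ab}\circ\Sbar=\Sbar\circ\nabla_{ab}$ (Proposition~\ref{prop-sbar}(d)). Now $\Sbar(x_{w_0v})$ is a single monomial in positive generators (Proposition~\ref{prop-sbarpos}), and $\nabla_{w_0w}$ acts by deleting subsequences of variables and multiplying by $\langle\cdot,\cdot\rangle$-values that are $0$ or $1$ (Proposition~\ref{prop-pair1}). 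So positivity is structural, with no cancellation needed. The positive recurrence of Corollary~\ref{cor-recurrence} is then \emph{derived} from this explicit formula, not used to prove it; and it is a recursion in $v$ (via $v'=s_iv$ with $a=v^{-1}(i)<v^{-1}(i+1)=b$, so the new factor $x_{ab}$ is automatically positive), not the left-peel in $w$ you wrote down.

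One minor correction: your worry that $x_w$ depends on the reduced word is unfounded. The braid relation (Proposition~\ref{prop-relations}(e)) follows from the defining relations of $\FK_n$, so the subalgebra generated by the $x_{i,i+1}$ is the nil-Coxeter algebra and $x_w$ is well defined.
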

An explicit expression for $\del_{w/v}$ follows from Corollary~\ref{cor-explicit}.

\subsection{Fomin-Kirillov algebra}
We will not need the full set of relations between the $\del_{ij}$ to prove Theorem~\ref{thm-maindd}, only the ones appearing in Proposition~\ref{prop-relations}. As such, it will be helpful to work inside the \emph{Fomin-Kirillov algebra} $\FK_n$, which is essentially defined by the relations in Proposition~\ref{prop-relations}(a)--(d). (Relation (e) can be derived from these four.) See \cite{BLM, FK} for more information.

\begin{defn}
	The Fomin-Kirillov algebra $\FK_n$ is the (noncommutative) algebra with generators $x_{ij} = -x_{ji}$ for $1 \leq i < j \leq n$ satisfying the following relations for distinct $i$, $j$, $k$, and $l$:
	\begin{itemize}
		\item $x_{ij}^2=0$;
		\item $x_{ij}x_{kl} = x_{kl}x_{ij}$; and
		\item $x_{ij}x_{jk} + x_{jk}x_{ki} + x_{ki}x_{ij} = 0$.
	\end{itemize}
\end{defn}

The divided difference operators therefore give a representation of $\FK_n$, though this representation is not faithful for $n \geq 3$. As with divided difference operators, the subalgebra generated by $x_{12}, x_{23}, \dots, x_{n-1,n}$ is isomorphic to the nil-Coxeter algebra. Given any $w \in \SS_n$ with reduced expression $w=s_{i_1}\dotsm s_{i_\ell}$, we will define $x_w = x_{i_1,i_1+1} \dotsm x_{i_\ell, i_\ell+1} \in \FK_n$. We can similarly define $x_{w/v} \in \FK_n$ as before (or one can take Proposition~\ref{prop-skew} below as a definition).

One important property of $\FK_n$ is that it has a large amount of structure: in particular, it is a \emph{braided Hopf algebra}. (This is not the case for the algebra generated by divided difference operators, which is a quotient of $\FK_n$.) We review some properties of this structure below. For more details about this structure and braided Hopf algebras in general, see \cite{AS, BLM, FP, MS}.

\subsubsection{Grading and braiding} In addition to the usual degree grading, the Fomin-Kirillov algebra has an $\SS_n$-grading: define the $\SS_n$-degree of $x_{ij}$ to be $s_{ij}$ and extend by multiplicativity. We will write $s_P$ for the $\SS_n$-degree of an $\SS_n$-homogeneous element $P \in \FK_n$. We will use the word ``homogeneous'' to mean homogeneous with respect to both the usual grading and the $\SS_n$-grading.

The Fomin-Kirillov algebra also has an $\SS_n$-action given by permuting the indices of the generators $x_{ij}$. In other words, for $w \in \SS_n$, $w(x_{ij}) = x_{w(i)w(j)}$, and we extend by multiplicativity. This induces an automorphism of $\FK_n$.

Using this gradation and action, $\FK_n$ can be thought of as an object in a braided monoidal category (or more specifically, in the \emph{Yetter-Drinfeld category} over $\CC[S_n]$). In other words, we define a braiding $\tau\colon \FK_n \otimes \FK_n \to \FK_n \otimes \FK_n$ by $\tau(P \otimes Q) = Q \otimes s_Q^{-1}(P)$ for homogeneous $P, Q \in \FK_n$. We use this to define a braided product structure on $\FK_n \otimes \FK_n$ via \[(P_1 \otimes P_2) (Q_1 \otimes Q_2) = P_1Q_1 \otimes s_{Q_1}^{-1}(P_2)Q_2.\]

\begin{rmk}
	This convention for the braiding is different from the usual convention, but we use it here for ease of compatibility with the definitions of skew divided difference operators.
\end{rmk}

\subsubsection{Coproduct} There exists a coproduct $\Delta$ on $\FK_n$ defined by $x_{ij} \mapsto x_{ij} \otimes 1 + 1 \otimes x_{ij}$ and extended to all of $\FK_n$ as a braided homomorphism.

\begin{ex}
	We can compute $\Delta(x_{12}x_{23})$:
	\begin{align*}
		\Delta(x_{12}x_{23}) &= (x_{12} \ot 1 + 1 \ot x_{12})(x_{23} \ot 1 + 1 \ot x_{23})\\
		&= (x_{12} \ot 1)(x_{23} \ot 1) + (x_{12} \ot 1)(1 \ot x_{23}) + (1 \ot x_{12})(x_{23} \ot 1) + (1 \ot x_{12})(1 \ot x_{23})\\
		&= x_{12}x_{23} \ot 1 +  x_{12} \ot x_{23} + x_{23} \ot x_{13} + 1 \ot x_{12}x_{23}.
	\end{align*}
	Compare this calculation to Example~\ref{ex-skew}.
\end{ex}

The main reason for introducing the coproduct structure is the following proposition.
\begin{prop} \label{prop-skew}
	For any $w \in \SS_n$, $\Delta(x_w) = \sum_{v\in \SS_n} x_v \ot x_{w/v}$.
\end{prop}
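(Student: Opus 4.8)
The plan is to prove the identity by induction on $\ell(w)$, peeling off one generator at a time and using that $\Delta$ is a homomorphism for the braided product. The base case $w = \id$ is immediate, since $\Delta(1) = 1 \otimes 1 = x_{\id} \otimes x_{\id/\id}$ and $x_{\id/v} = 0$ for $v \neq \id$.

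For the inductive step, I would choose a reduced expression for $w$ beginning with a simple transposition, say $w = s_i u$ with $\ell(u) = \ell(w)-1$, so that $x_w = x_{i,i+1}\,x_u$. Since $\Delta$ is a braided homomorphism and using the inductive hypothesis for $x_u$,
\[
\Delta(x_w) = (x_{i,i+1}\otimes 1 + 1 \otimes x_{i,i+1})\sum_{v} x_v \otimes x_{u/v}.
\]
Expanding with the rule $(P_1 \otimes P_2)(Q_1 \otimes Q_2) = P_1 Q_1 \otimes s_{Q_1}^{-1}(P_2)Q_2$ gives two families of terms. From $x_{i,i+1}\otimes 1$ we get $x_{i,i+1}x_v \otimes x_{u/v}$, where the nil-Coxeter relation in the first slot makes $x_{i,i+1}x_v$ equal to $x_{s_iv}$ when $\ell(s_iv)=\ell(v)+1$ and $0$ otherwise. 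From $1 \otimes x_{i,i+1}$ we get $x_v \otimes v^{-1}(x_{i,i+1})\,x_{u/v}$, where the braiding produces the twist $v^{-1}(x_{i,i+1}) = x_{v^{-1}(i),\,v^{-1}(i+1)}$.

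Re-indexing the first family by $v' = s_iv$ and keeping $v'=v$ in the second, I would read off the coefficient of $x_{v'}$ in the first tensor slot:
\[
\Delta(x_w) = \sum_{v'} x_{v'} \otimes \Bigl( {v'}^{-1}(x_{i,i+1})\,x_{u/v'} + [\ell(s_iv')<\ell(v')]\; x_{u/(s_iv')}\Bigr).
\]
It then remains to identify the parenthesized expression with $x_{w/v'}$. I would check this directly from the definition of $x_{w/v'}$ as ${v'}^{-1}\sum_J \psi_J$, splitting the subsets $J$ according to whether the first letter $s_i$ of the chosen reduced word lies in $J$. The case $1 \notin J$ contributes ${v'}^{-1}x_{i,i+1}\,(\cdots) = {v'}^{-1}(x_{i,i+1})\,x_{u/v'}$ after moving ${v'}^{-1}$ past $x_{i,i+1}$ in the semidirect product $\FK_n \rtimes \SS_n$, while the case $1 \in J$ forces a reduced word for $v'$ to begin with $s_i$ (hence $\ell(s_iv')<\ell(v')$) and contributes ${v'}^{-1}s_i\,(\cdots) = (s_iv')^{-1}(\cdots) = x_{u/(s_iv')}$. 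This matches the recurrence exactly. Since the $x_{v'}$ have distinct $\SS_n$-degrees and are thus linearly independent, equating coefficients of $x_{v'}$ both proves the claim and shows that $x_{w/v'}$ is independent of the reduced word.

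The step I expect to be the main obstacle is the bookkeeping of the braiding against the $\SS_n$-action in the definition of $x_{w/v}$: one must verify that the twist $v^{-1}(x_{i,i+1})$ introduced by the coproduct in the second tensor slot agrees precisely with the conjugation by ${v'}^{-1}$ that arises when the first letter is excluded from $J$. This is exactly where the nonstandard braiding convention $\tau(P\otimes Q) = Q \otimes s_Q^{-1}(P)$ is essential, since it is designed to make these two operations coincide. Equivalently, one can avoid the induction and expand $\Delta(x_w) = \prod_j \Delta(x_{i_j,i_j+1})$ directly into a sum over subsets $J$; the same braiding bookkeeping then shows the $J$-term equals $x_v \otimes {v}^{-1}\psi_J$ with $v = \prod_{j\in J}s_{i_j}$, and grouping by $v$ recovers the statement.
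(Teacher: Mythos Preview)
Your proof is correct and is essentially a detailed unpacking of the paper's one-line proof, which simply says the proposition ``follows from the definition of $x_{w/v}$ and the coproduct structure.'' The direct expansion you sketch at the end---writing $\Delta(x_w) = \prod_j \Delta(x_{i_j,i_j+1})$ and distributing into a sum over subsets $J$, with the braiding producing exactly the conjugation by $v^{-1}$ in the definition of $x_{w/v}$---is precisely what the paper's proof has in mind.
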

\begin{proof}
	Follows from the definition of $x_{w/v}$ and the coproduct structure.
\end{proof}

Note that if $P \in \FK_n$ is a monomial, then any term appearing in the first tensor factor of $\Delta(P)$ is the product of a subsequence of variables in $P$. (This is not the case for the second tensor factor due to the braiding.)

\subsubsection{Pairing} There exists a unique linear map $\Delta_{ab}\colon \FK_n \to \FK_n$ satisfying
\[\Delta_{ab}(x_{ij}) = \begin{cases} 1, & \text{if }i=a, j=b;\\
-1, & \text{if }i=b,j=a;\\
0, & \text{otherwise;}\end{cases}\]
and $\Delta_{ab}(PQ) = \Delta_{ab}(P)\cdot Q + s_{ab}(P) \cdot \Delta_{ab}(Q)$ for all $P,Q \in \FK_n$. The operators $\Delta_{ab}$ satisfy the relations of $\FK_n$, so they define a left action of $\FK_n$ on itself. We can think of $\Delta_{ab}$ as having degree $-1$ and $\SS_n$-degree $s_{ab}$. For any $P \in \FK_n$, we will write $\Delta_P$ for the corresponding operator; in other words, if $P = x_{i_1j_1}\dotsm x_{i_kj_k}$, then $\Delta_P = \Delta_{i_1j_1} \dotsm \Delta_{i_kj_k}$, and we extend by linearity. 

There is likewise a dual action: one can define $\nabla_{ab}\colon \FK_n \to \FK_n$ (acting on the right) satisfying $ (x_{ij})\nabla_{ab}=\Delta_{ab}(x_{ij})$ and $(PQ)\nabla_{ab} = P\cdot (Q)\nabla_{ab} + (P)(s_Q\nabla_{ab}) \cdot Q$, where $s_Q\nabla_{ab} = \nabla_{s_Q(a)s_Q(b)}$ for homogeneous $P,Q \in \FK_n$. Then the operators $\nabla_{ab}$ define a right action of $\FK_n$ on itself. We define $\nabla_P$ for any $P \in \FK_n$ as above.

Note that when $\nabla_{ab}$ acts on a monomial $P$, it results in a linear combination of monomials obtained from $P$ by removing a variable. In other words, $(P)\nabla_{ab}$, and in general $(P)\nabla_Q$, has an expression that only contains variables appearing in $P$. (This is not the case for the action of $\Delta_{ab}$ due to the twisting action.)

\begin{ex}
	Here are two computations involving these operators:
	\begin{align*}
		\Delta_{23}(x_{12}x_{23}x_{12}) &= \Delta_{23}(x_{12}) \cdot x_{23}x_{12} + x_{13} \cdot \Delta_{23}(x_{23}) \cdot x_{12} + x_{13}x_{32} \cdot \Delta_{23}(x_{12})\\& = x_{13}x_{12},\\
		(x_{12}x_{23}x_{12})\nabla_{23} &= x_{12}x_{23} \cdot (x_{12})\nabla_{23} + x_{12} \cdot (x_{23})\nabla_{13} \cdot x_{12} + (x_{12})\nabla_{12} \cdot x_{23}x_{12}\\
		&= x_{23}x_{12}.
	\end{align*}
\end{ex}

If $P$ and $Q$ are homogeneous of the same degree, then $\Delta_P(Q) = \Delta_Q(P) = (P)\nabla_Q = (Q)\nabla_P$. If we write $\langle P, Q \rangle = \Delta_P(Q)$, then this defines a symmetric bilinear form $\langle \cdot, \cdot \rangle$ on $\FK_n$. With respect to this form, $\Delta_P$ and $\nabla_P$ are adjoint to right and left multiplication by $P$, respectively. If $P$ and $Q$ are homogeneous, then $\langle P, Q \rangle = 0$ unless $\deg P = \deg Q$ and $s_P = s_Q^{-1}$.

An alternative way to describe $\Delta_P$ and $\nabla_P$ is in terms of this bilinear form and the coproduct. If $\Delta(Q) = \sum Q^i_{(1)} \ot Q^i_{(2)}$, then
\begin{align*}
\Delta_P(Q) &= \sum \langle P, Q^i_{(1)} \rangle \cdot Q^i_{(2)},\\
(Q)\nabla_P &= \sum Q^i_{(1)} \cdot \langle Q^i_{(2)}, P \rangle.
\end{align*}
Note that by the cocommutativity of the coproduct, $\Delta_{P_1}$ and $\nabla_{P_2}$ commute: if $(\Delta \ot \Delta)(Q) = \sum Q^i_{(1)} \ot Q^i_{(2)} \ot Q^i_{(3)}$, then
\[\Delta_{P_1}(Q)\nabla_{P_2} = \sum \langle P_1, Q^i_{(1)} \rangle \cdot Q^i_{(2)} \cdot \langle Q^i_{(3)}, P_2 \rangle.\]

\subsubsection{Antipode} The antipode $S\colon \FK_n \to \FK_n$ is defined by $x_{ij} \mapsto -x_{ij}$, extended to all of $\FK_n$ as a braided antihomomorphism. In other words, if $\mu\colon \FK_n \ot \FK_n \to \FK_n$ is the multiplication map, then
\[S(PQ) = S(\mu(P \ot Q)) = \mu(\tau(S(P) \ot S(Q))) = \mu(S(Q) \ot s_Q^{-1}(S(P))) = S(Q) \cdot s_Q^{-1}(S(P)).\]
The antipode preserves $\SS_n$-degree.

\begin{ex}
	The antipode of $x_{12}x_{23}x_{34}$ is
	\begin{align*}
		S(x_{12}x_{23}x_{34}) &= S(x_{34}) \cdot s_{34}(S(x_{12}x_{23}))\\
		&= S(x_{34}) \cdot s_{34}(S(x_{23})) \cdot s_{34}s_{23}(S(x_{12}))\\
		&= (-x_{34}) \cdot (-x_{24}) \cdot (-x_{14})\\
		&= -x_{34}x_{24}x_{14}.
	\end{align*}
\end{ex}

It will be helpful for us to introduce a variant of the antipode. Let $\rho\colon \FK_n \to \FK_n$ be the map that reverses the order of any monomial, and for any homogeneous $P$, let $\Sbar(P) = (-1)^{\deg P}\rho(S(P))$. For example, $\Sbar(x_{12}x_{23}x_{34}) = x_{14}x_{24}x_{34}$. Note that $s_{\Sbar(P)} = s_P^{-1}$.

From this definition, it is easy to check that $\Sbar(x_{i_1j_1} \cdots x_{i_\ell j_\ell}) = y_1\cdots y_\ell$, where $y_k = s_{i_\ell j_\ell} \cdots s_{i_{k+1}j_{k+1}}(x_{i_kj_k})$.

The following proposition gives some important properties of $\Sbar$. 
\begin{prop}\label{prop-sbar}
	\begin{enumerate}[(a)]
		\item For homogeneous $P,Q \in \FK_n$, $\Sbar(PQ) = s_Q^{-1}(\Sbar(P)) \cdot \Sbar(Q)$.
		\item The map $\Sbar$ is an involution.
		\item Let $\overline{\tau} \colon \FK_n \ot \FK_n \to \FK_n \ot \FK_n$ be the linear map that switches the two tensor factors (without twisting). Then $\Delta\circ\Sbar = \taubar \circ (\Sbar \ot \Sbar) \circ \Delta$.
		\item For any $P \in \FK_n$, $\Delta_{ab}(\Sbar(P)) = \Sbar((P)\nabla_{ab})$. 
		\item The operators $\Sbar$ and $\rho$ are adjoint with respect to $\langle \cdot, \cdot \rangle$.
	\end{enumerate}
\end{prop}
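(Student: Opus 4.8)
The plan is to prove the five parts in the order (a), (b), (c), (e), (d), since each of the later parts builds on the earlier ones. Part (a) I would obtain by unwinding the definitions directly. Since $S$ is a braided antihomomorphism, $S(PQ) = S(Q)\cdot s_Q^{-1}(S(P))$, and $\rho$ reverses monomials so $\rho(AB) = \rho(B)\rho(A)$; because the $\SS_n$-action merely relabels indices it commutes with $\rho$ and, being linear, pulls scalars through. Combining these with $\Sbar(P) = (-1)^{\deg P}\rho(S(P))$ gives $\Sbar(PQ) = (-1)^{\deg P + \deg Q}\, s_Q^{-1}(\rho(S(P)))\cdot\rho(S(Q)) = s_Q^{-1}(\Sbar(P))\cdot\Sbar(Q)$, which is (a). For (b) I would check the base case $\Sbar(x_{ij}) = x_{ij}$ (immediate from $S(x_{ij}) = -x_{ij}$ and the sign) and then induct on degree using (a). The auxiliary fact needed is that $\Sbar$ commutes with the $\SS_n$-action; this follows because $S$ does (verify on generators, then extend using that the action conjugates $\SS_n$-degrees, so $s_{w(Q)} = w s_Q w^{-1}$). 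Applying (a) twice and using $s_{\Sbar Q} = s_Q^{-1}$ then lets the twists cancel, yielding $\Sbar(\Sbar(PQ)) = PQ$.

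The main obstacle is (c), the braided analogue of the statement that an antipode is a coalgebra antihomomorphism. I would prove it by induction on degree. Both sides agree on $1$ and on each generator $x_{ij}$ by a direct check. For the inductive step I would show that $F = \Delta\circ\Sbar$ and $G = \taubar\circ(\Sbar\ot\Sbar)\circ\Delta$ satisfy the same recursion on a product $PQ$. Using (a) together with the facts that $\Delta$ is a braided homomorphism and is $\SS_n$-equivariant, one gets $F(PQ) = (s_Q^{-1}\ot s_Q^{-1})(F(P))\cdot F(Q)$ in the braided product on $\FK_n\ot\FK_n$; for $G$ one expands $\Delta(PQ) = \Delta(P)\Delta(Q)$ and pushes $(\Sbar\ot\Sbar)$ and then $\taubar$ through this braided product. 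The delicate point, and where I expect the real work to lie, is checking that the braiding twists produced when $\taubar$ reverses the braided product match exactly the twists appearing in the recursion for $F$; this bookkeeping of the $\SS_n$-action through the braiding is the crux of the whole proposition.

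For (e) I would induct on the common degree, reducing to monomials and peeling the last generator $x_{ab}$ off $P$. Writing $P = P'x_{ab}$ and using (a) gives $\Sbar(P) = s_{ab}(\Sbar(P'))\,x_{ab}$. Then the adjointness of $\Delta_{ab}$ to right multiplication and the $\SS_n$-invariance of the form (which follows from the equivariance $\Delta_{w(a)w(b)} = w\Delta_{ab}w^{-1}$, checked on generators) reduce $\langle\Sbar(P),Q\rangle$ to $\langle\Sbar(P'),\,s_{ab}(\Delta_{ab}(Q))\rangle$, to which the inductive hypothesis applies. To match this against $\langle P,\rho(Q)\rangle = \langle P',\Delta_{ab}(\rho(Q))\rangle$ I would first establish, separately and by induction on degree from the Leibniz rules for $\Delta_{ab}$ and $\rho$, the operator identity $\rho\circ\Delta_{ab} = s_{ab}\circ\Delta_{ab}\circ\rho$ (I have verified it in degree two). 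Substituting this identity makes the two expressions coincide, so that no nondegeneracy of the form is needed.

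Finally, (d) is a formal consequence of (c) and (e). Writing $\Delta(P) = \sum P^i_{(1)}\ot P^i_{(2)}$, part (c) gives $\Delta(\Sbar(P)) = \sum \Sbar(P^i_{(2)})\ot\Sbar(P^i_{(1)})$, so the coproduct description of $\Delta_{ab}$ yields $\Delta_{ab}(\Sbar(P)) = \sum \langle x_{ab},\Sbar(P^i_{(2)})\rangle\,\Sbar(P^i_{(1)})$, where only the degree-one terms $P^i_{(2)}$ contribute. Using (e) together with the symmetry of the form and $\rho(x_{ab}) = x_{ab}$ rewrites each scalar as $\langle P^i_{(2)},x_{ab}\rangle$, and the coproduct description of $\nabla_{ab}$ then identifies the sum with $\Sbar((P)\nabla_{ab})$, giving (d). Thus the entire proposition rests on the inductive verification of (c), with the twisted commutation lemma $\rho\Delta_{ab} = s_{ab}\Delta_{ab}\rho$ as the secondary technical ingredient feeding into (e) and (d).
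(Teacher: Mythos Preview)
Your argument is correct, and for parts (a)--(c) it is essentially the same as the paper's. The difference is in how you handle (d) and (e). You prove (e) first, by a separate induction resting on the auxiliary identity $\rho\circ\Delta_{ab} = s_{ab}\circ\Delta_{ab}\circ\rho$, and then deduce (d) from (c) and (e). The paper instead goes in the opposite order: once (c) is known, (d) follows immediately from the coproduct description of $\Delta_{ab}$ together with the trivial observation that $\Sbar$ is the identity in degree~$1$, so that $\langle x_{ab}, \Sbar(P^i_{(2)})\rangle = \langle x_{ab}, P^i_{(2)}\rangle$ whenever the term contributes. Then (e) follows from (d) in one line by iterating: $\langle Q,\Sbar(P)\rangle = \Delta_Q(\Sbar(P)) = \Sbar((P)\nabla_{\rho(Q)}) = (P)\nabla_{\rho(Q)} = \langle P,\rho(Q)\rangle$, using that the result is a scalar. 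Your route works, and the lemma $\rho\Delta_{ab} = s_{ab}\Delta_{ab}\rho$ is a nice fact in its own right, but the paper's ordering avoids that extra induction entirely; you are working harder than necessary by not exploiting that $\Sbar$ is trivial on generators.
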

\begin{proof}
	For (a),
	\begin{align*}
	\Sbar(PQ) &= (-1)^{\deg PQ} \cdot \rho(S(PQ))\\
	& = (-1)^{\deg P + \deg Q}\cdot \rho(S(Q) \cdot s_Q^{-1}(S(P))\\
	& = (-1)^{\deg P}s_{Q}^{-1}(\rho(S(P)) \cdot (-1)^{\deg Q}\rho(S(Q)) \\
	& = s_Q^{-1}(\Sbar(P)) \cdot \Sbar(Q).
	\end{align*}
	
	For (b), we induct on degree. It is clear that $\Sbar^2$ is the identity in degree 0 and 1. Then for higher degrees,
	\[\Sbar^2(PQ) = \Sbar(s_Q^{-1}(\Sbar(P)) \cdot \Sbar(Q)) = s_{\Sbar(Q)}^{-1}\Sbar(s_Q^{-1}(\Sbar(P))) \cdot \Sbar^2(Q) = s_Qs_Q^{-1}(\Sbar^2(P)) \cdot Q = PQ.\]
	
	For (c), we again induct on degree. Again the claim is clear in degree 0 or 1. Suppose $\Delta(P) = \sum P^i_{(1)} \ot P^i_{(2)}$ and $\Delta(Q) = \sum Q^j_{(1)} \ot Q^j_{(2)}$. Then by induction,
	\begin{align*}
		\Delta \circ \Sbar(PQ) &= \Delta(s_Q^{-1}(\Sbar(P)) \cdot \Sbar(Q))\\
		&= s_Q^{-1}\Delta(\Sbar(P)) \cdot \Delta(\Sbar(Q))\\
		&= s_Q^{-1}(\taubar \circ (\Sbar \ot \Sbar) \circ \Delta(P)) \cdot \taubar \circ (\Sbar \ot \Sbar) \circ \Delta(Q)\\
		&= \sum_{i,j} s_Q^{-1} (\Sbar(P^i_{(2)}) \ot \Sbar(P^i_{(1)})) \cdot (\Sbar(Q^j_{(2)}) \ot \Sbar(Q^j_{(1)}))\\
		&= \sum_{i,j} s_Q^{-1}(\Sbar(P^i_{(2)}))\Sbar(Q^j_{(2)}) \ot s_{\Sbar(Q^j_{(2)})}^{-1}(s_Q^{-1}(\Sbar(P^i_{(1)})))\Sbar(Q^j_{(1)})\\
		&= \sum_{i,j} s_{Q^j_{(2)}}^{-1} \Sbar(s_{Q^j_{(1)}}^{-1}(P^i_{(2)}))\Sbar(Q^j_{(2)}) \ot s_{Q^j_{(1)}}^{-1}(\Sbar(P^i_{(1)}))\Sbar(Q^j_{(1)})\\
		&= \sum_{i,j} \Sbar(s_{Q^j_{(1)}}^{-1}(P^i_{(2)})Q^j_{(2)}) \ot \Sbar(P^i_{(1)}Q^j_{(1)})\\
		&= \sum_{i,j}\taubar \circ (\Sbar \ot \Sbar)(P^i_{(1)}Q^j_{(1)} \ot s^{-1}_{Q^j_{(1)}}P^i_{(2)}Q^j_{(2)})\\
		&= \taubar \circ (\Sbar \ot \Sbar) \circ \Delta(PQ).
	\end{align*}
	
	For (d),
	\begin{align*}
		\Delta \circ \Sbar(P) &= \taubar \circ (\Sbar \ot \Sbar) \circ \Delta(P)\\
		&= \sum_i \taubar \circ (\Sbar \ot \Sbar)(P^i_{(1)} \ot P^i_{(2)})\\
		&= \sum_i \Sbar(P^i_{(2)}) \ot \Sbar(P^i_{(1)}).
	\end{align*}
	Thus
	\[
		\Delta_{ab}(\Sbar(P)) = \sum_i \langle x_{ab}, \Sbar(P^i_{(2)}) \rangle \cdot \Sbar(P^i_{(1)})
		= \sum_i \langle x_{ab}, P^i_{(2)} \rangle \cdot \Sbar(P^i_{(1)})
		= \Sbar((P) \nabla_{ab}).
	\]
	
	For (e), if $P$ and $Q$ are homogeneous of the same degree, then
	\[\langle Q, \Sbar(P) \rangle = \Delta_Q(\Sbar (P)) = \Sbar((P) \Delta_{\rho(Q)}) = (P)\Delta_{\rho(Q)} = \langle P, \rho(Q) \rangle.\qedhere\]
\end{proof}
\begin{rmk}
	Propostion~\ref{prop-sbar}(e) easily implies that the antipode $S$ is self-adjoint, but we will not need this result below.
\end{rmk}
Since we did not use any of the relations of $\FK_n$ in proving Proposition~\ref{prop-sbar}, the analogous result holds even in the full tensor algebra with no relations. In particular, for part (c), if $P$ is a monomial of degree $d$, then the $2^{d}$ terms obtained from expanding $\Delta \circ \Sbar(P)$ are identical to the $2^{d}$ terms obtained from expanding $\taubar \circ (\Sbar \ot \Sbar) \circ \Delta(P)$ without using any relations between the $x_{ij}$ (even $x_{ij} = -x_{ji}$).

In the next section, we will use the properties presented above to prove Theorem~\ref{thm-maindd}.

\section{Positivity}
Let us denote by $\FK_n^+$ the positive span of monomials in variables $x_{ij}$ with $i<j$. In this section, we will prove that $x_{w/v}\in \FK_n^+$ as conjectured by Kirillov \cite{Kirillov}. This will immediately imply Theorem~\ref{thm-maindd}.

First we relate the operators and pairing described in the previous section to the Bruhat order.

\begin{prop} \label{prop-nabla}
	\begin{enumerate}[(a)]
		\item Let $w \in \SS_n$, and choose any $x_{ij} \in \FK_n^+$. Then $(x_w)\nabla_{ij} = x_{ws_{ij}}$ if $ws_{ij} \lessdot w$ and 0 otherwise.
		\item Let $v,w \in \SS_n$. Then $(x_w)\nabla_v = x_{v'}$ if there exists a reduced factorization $w = v' \cdot v^{-1}$ and 0 otherwise.
	\end{enumerate}
\end{prop}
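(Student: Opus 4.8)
The plan is to prove part (a) directly from the defining Leibniz rule for $\nabla_{ij}$, then derive part (b) by iterating (a) along a reduced expression for $v$.

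For part (a), I would take a reduced expression $w = s_{k_1}\dotsm s_{k_\ell}$, so that $x_w = x_{k_1,k_1+1}\dotsm x_{k_\ell,k_\ell+1}$, and apply the right-action Leibniz rule $(PQ)\nabla_{ab} = P\cdot(Q)\nabla_{ab} + (P)(s_Q\nabla_{ab})\cdot Q$ repeatedly. Expanding $(x_w)\nabla_{ij}$ this way produces a sum over the positions $m$ where a single variable $x_{k_m,k_m+1}$ is hit by a $\nabla$, with the twisting $s_Q\nabla_{ab}$ conjugating the index $(i,j)$ by the product of simple transpositions to the right. Concretely, the $m$th term survives only when the twisted index matches $(k_m,k_m+1)$ (up to sign), and in that case the surviving monomial is exactly $x_w$ with the $m$th variable deleted. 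The key computation is to identify which deletions correspond to $ws_{ij}\lessdot w$: deleting the $m$th letter from a reduced word yields a subword that is reduced (and hence a valid $x_{w'}$) precisely when the corresponding product drops in length, and the transposition realizing the cover is exactly $s_{ij}$ after accounting for the conjugation. I expect that tracking the signs and the twisting action carefully — verifying that the twisted index hits $(k_m,k_m+1)$ exactly when $ws_{ij}\lessdot w$ via that position — is the one place demanding genuine care rather than routine bookkeeping.

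For part (b), I would induct on $\ell(v)$, writing $v = v''\cdot s_{ij}$ as a reduced factorization so that $\nabla_v = \nabla_{v''}\nabla_{ij}$ (using that $\nabla$ gives a right action of $\FK_n$ and that $x_v = x_{v''}x_{ij}$ respects the nil-Coxeter structure). Applying part (a) first gives $(x_w)\nabla_{ij} = x_{ws_{ij}}$ when $ws_{ij}\lessdot w$ and $0$ otherwise; in the surviving case the inductive hypothesis applied to $ws_{ij}$ and $v''$ completes the step. The bookkeeping amounts to checking that the chain of covers $w\gtrdot ws_{ij}\gtrdot\dotsb$ assembles into a reduced factorization $w = v'\cdot v^{-1}$, with $v' = w v$ of the correct length; conversely, any such reduced factorization yields a chain of covers making each application of (a) nonzero.

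The main obstacle is the sign and twisting analysis in part (a): because $\nabla_{ab}$ acts on the right with the index getting conjugated by $s_Q$ as it passes each variable, one must confirm that the net effect of these conjugations lands on the correct transposition and produces a genuinely positive (rather than merely signed) monomial $x_{ws_{ij}}\in\FK_n^+$. Once (a) is pinned down, part (b) is a clean induction.
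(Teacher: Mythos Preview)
Your proposal is correct and essentially identical to the paper's proof: the paper expands $(x_w)\nabla_{ij}$ by computing the degree-$(\ell-1,1)$ part of $\Delta(x_w)$ and pairing the second factor with $x_{ij}$, which is term-for-term the same sum your iterated Leibniz rule produces, and then invokes the strong exchange condition together with exactly the sign argument you flag as the crux (namely that reducedness forces $s_{a_\ell}\cdots s_{a_{k+1}}(a_k) < s_{a_\ell}\cdots s_{a_{k+1}}(a_k+1)$). One small slip in part~(b): with $v = v''\cdot s_{ij}$ and $\nabla_v = \nabla_{v''}\nabla_{ij}$ acting on the right, it is $\nabla_{v''}$ that hits $x_w$ first, so you should either peel the simple reflection from the \emph{left} of $v$ or reorder the induction accordingly---either way the argument goes through unchanged.
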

\begin{proof}
	Let $w = s_{a_1}\cdots s_{a_\ell}$. From the definition of coproduct, the degree $(\ell-1, 1)$ part of $\Delta(x_w)$ is 
	\[
	\sum_{k=1}^\ell x_{a_1,a_1+1} \cdots \widehat x_{a_k,a_k+1} \cdots x_{a_\ell, a_\ell+1} \ot s_{a_\ell} \cdots s_{a_{k+1}}(x_{a_k,a_k+1}).
	\]
	The first tensor factor is nonzero if and only if $s_{a_1} \cdots \widehat s_{a_k} \cdots s_{a_\ell}$ is a reduced expression for some $v \in \SS_n$ with $v \lessdot w$, and in this case, the second tensor factor must be, up to sign, $x_{v^{-1}w}$.
	
	If $v \lessdot w$, then by the strong exchange condition for Bruhat order, there is a unique $k$ such that $s_{a_1} \cdots \widehat s_{a_k} \cdots s_{a_\ell}$ is a reduced expression for $v$. Hence $(x_w)\nabla_{ij} = 0$ unless $v=ws_{ij} \lessdot w$, in which case $(x_w)\nabla_{ij} = x_v \cdot \langle s_{a_\ell}\cdots s_{a_{k+1}}  (x_{a_k,a_k+1}), x_{ij} \rangle = \pm x_v$. But in fact this must have a positive sign: since $s_{a_k} \cdot s_{a_{k+1}} \cdots s_{a_\ell} > s_{a_{k+1}} \cdots s_{a_\ell}$, it follows that $s_{a_{\ell}} \cdots s_{a_{k+1}}(a_k) < s_{a_{\ell}} \cdots s_{a_{k+1}}(a_k+1)$, so the former must be $i$ and the latter $j$ (since $i<j$ by assumption).
	
	For part (b), choose any reduced expression for $v$ and apply part (a) repeatedly.
\end{proof}

By iterating this result, we can prove the following consequence.

\begin{prop}\label{prop-pair1}
	Let $w \in \SS_n$, and let $P=x_{i_1j_1}\dots x_{i_\ell j_\ell} \in \FK_n^+$ be any monomial. Let $v_k = s_{i_kj_k}s_{i_{k+1}j_{k+1}}\dotsm s_{i_\ell j_\ell}$.
	Then $\langle x_w, P \rangle = 1$ if \[\id\lessdot v_\ell\lessdot v_{\ell-1}\lessdot \dotsb \lessdot v_1 = w^{-1}\]
	is a saturated chain in the Bruhat order of $\SS_n$; otherwise $\langle x_w, P \rangle = 0$.
	
	In particular, for $v,w \in \SS_n$, $\langle x_v, x_w \rangle = 1$ if $w=v^{-1}$ and 0 otherwise.
\end{prop}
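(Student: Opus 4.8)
The plan is to prove Proposition~\ref{prop-pair1} by induction on $\ell = \deg P$, using the adjointness between $\nabla_P$ and the pairing together with Proposition~\ref{prop-nabla}. The key identity is that $\langle x_w, P \rangle = \langle x_w, x_{i_1j_1} \cdot (x_{i_2j_2}\cdots x_{i_\ell j_\ell}) \rangle$, and since $\nabla_{i_1j_1}$ is left-adjoint to left multiplication by $x_{i_1j_1}$ (as stated in the pairing subsection, $\nabla_P$ is adjoint to left multiplication by $P$), I can peel off the leftmost factor:
\[
\langle x_w, P \rangle = \langle (x_w)\nabla_{i_1j_1}, x_{i_2j_2}\cdots x_{i_\ell j_\ell} \rangle.
\]

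$\,$First I would set up the base case $\ell = 0$, where $P = 1$ and $x_w = 1$ forces $w = \id$, matching the empty chain $\id = v_1 = w^{-1}$. For the inductive step, I apply Proposition~\ref{prop-nabla}(a) to evaluate $(x_w)\nabla_{i_1j_1}$: this is $x_{ws_{i_1j_1}}$ if $ws_{i_1j_1} \lessdot w$ and $0$ otherwise. Note that $ws_{i_1j_1} \lessdot w$ is equivalent to $s_{i_1j_1} w^{-1} \lessdot w^{-1}$, i.e. $(s_{i_1j_1} w^{-1})$ is covered by $w^{-1}$, which in the chain notation is exactly the condition $v_2 = s_{i_1j_1}^{-1} w^{-1} = s_{i_1j_1} v_1 \lessdot v_1 = w^{-1}$. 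Here I would need to carefully check that the new permutation $(ws_{i_1j_1})^{-1} = s_{i_1j_1} w^{-1}$ equals $v_2 = s_{i_2j_2}\cdots s_{i_\ell j_\ell}$, so that the inductive hypothesis applies to $\langle x_{ws_{i_1j_1}}, x_{i_2j_2}\cdots x_{i_\ell j_\ell}\rangle$ with the shifted chain $\id \lessdot v_\ell \lessdot \dotsb \lessdot v_2$.

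$\,$The main bookkeeping subtlety — and the step I expect to require the most care — is tracking the cover relation correctly through the inversion and matching the indices $v_k$ in the statement, since the pairing is nonzero only when $s_P = s_Q^{-1}$, which here forces $w^{-1} = s_{i_1j_1}\cdots s_{i_\ell j_\ell} = v_1$ at the top of the chain. When $(x_w)\nabla_{i_1j_1} = 0$, both sides vanish, and this corresponds precisely to $v_2 \not\lessdot v_1$ (the chain failing to descend by a cover); when it is nonzero, the positivity in Proposition~\ref{prop-nabla}(a) guarantees the coefficient is exactly $1$ rather than $-1$, so the pairing value propagates as a clean product of $1$'s down the chain. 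Iterating, $\langle x_w, P \rangle$ is the product over $k$ of the indicator that $v_{k+1} \lessdot v_k$, with $v_{\ell+1} = \id$, giving $1$ precisely when the full chain $\id \lessdot v_\ell \lessdot \dotsb \lessdot v_1 = w^{-1}$ is saturated and $0$ otherwise.

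$\,$The final ``in particular'' statement then follows by specializing to the case where $P = x_w$ is itself a reduced word $x_{i_1,i_1+1}\cdots x_{i_\ell,i_\ell+1}$ for $w$ built from simple transpositions: the chain $v_\ell \lessdot \dotsb \lessdot v_1$ is automatically saturated because a reduced expression yields a saturated chain in Bruhat order, forcing $\langle x_v, x_w\rangle = 1$ exactly when $w^{-1} = v$, i.e. $w = v^{-1}$, and $0$ otherwise by the degree and $\SS_n$-degree constraints on the pairing.
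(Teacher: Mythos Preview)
Your proposal is correct and follows essentially the same route as the paper's proof: induct on $\ell$, peel off the leftmost generator via the adjointness $\langle x_w, x_{i_1j_1}P'\rangle = \langle (x_w)\nabla_{i_1j_1}, P'\rangle$, apply Proposition~\ref{prop-nabla}(a) to reduce to $\langle x_{ws_{i_1j_1}}, P'\rangle$, and check that the cover condition $ws_{i_1j_1}\lessdot w$ together with the inductive chain for $P'$ reassembles into the full saturated chain ending at $v_1=w^{-1}$. The only place to be slightly more careful is your sentence ``$(x_w)\nabla_{i_1j_1}=0$ \ldots corresponds precisely to $v_2\not\lessdot v_1$'': this equivalence holds only once you have already used the $\SS_n$-degree constraint to force $v_1=w^{-1}$ (which you do note separately), so just make that dependency explicit when you write it up.
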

\begin{proof}
	Clearly for $\langle x_w, P \rangle$ to be nonzero, we must have $\ell = \ell(w) = \deg(P)$. We proceed by induction on $\ell$.
	
	Note $\langle x_w, P \rangle = \langle (x_w) \nabla_{i_1j_1}, P' \rangle$, where $P = x_{i_1j_1}P'$. This is nonzero if and only if, by Proposition~\ref{prop-nabla}, $ws_{i_1j_1} \lessdot w$ and, by induction, $\id \lessdot v_\ell\lessdot v_{\ell-1}\lessdot \dots \lessdot v_2 = (ws_{i_1j_1})^{-1}$ is a saturated Bruhat chain, in which case $\langle x_w, P \rangle = 1$. But then \[v_2 = s_{i_1j_1}w^{-1} \lessdot w^{-1} = s_{i_1j_1}v_2 = v_1,\]
	completing the Bruhat chain, as desired. 
\end{proof}

We will also need the following result about $\Sbar(x_w)$.
\begin{prop}\label{prop-sbarpos}
	Let $w \in \SS_n$. Then $\Sbar(x_w) \in \FK_n^+$. In fact, the variables appearing in $\Sbar(x_w)$ are precisely those $x_{ij}$ for which $i<j$ and $w(i)>w(j)$.
\end{prop}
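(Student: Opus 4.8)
The plan is to use the explicit monomial formula for $\Sbar$ recorded just before the statement. Fix a reduced expression $w = s_{i_1} \cdots s_{i_\ell}$, so that $x_w = x_{i_1,i_1+1} \cdots x_{i_\ell, i_\ell+1}$, and that formula gives
\[
\Sbar(x_w) = y_1 \cdots y_\ell, \qquad y_k = s_{i_\ell} \cdots s_{i_{k+1}}(x_{i_k, i_k+1}).
\]
Thus $\Sbar(x_w)$ is a single monomial, and everything reduces to analyzing each factor $y_k$. Writing $w_{>k} = s_{i_{k+1}} \cdots s_{i_\ell}$, the permutation applied to the indices of $y_k$ is $w_{>k}^{-1} = s_{i_\ell}\cdots s_{i_{k+1}}$, so since $w(x_{ij}) = x_{w(i)w(j)}$, the factor $y_k$ is the generator indexed by the pair $(w_{>k}^{-1}(i_k),\, w_{>k}^{-1}(i_k+1))$; equivalently, writing $\alpha_i = e_i - e_{i+1}$ for the simple roots of type $A_{n-1}$, it is indexed by the root $\beta_k := w_{>k}^{-1}(\alpha_{i_k})$.

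For the first assertion I would show each $\beta_k$ is a positive root, so that $y_k = x_{a_k b_k}$ with $a_k < b_k$ is a genuine positive generator requiring no sign change. Since the suffix $s_{i_k} s_{i_{k+1}} \cdots s_{i_\ell}$ of a reduced word is itself reduced, we have $\ell(s_{i_k} w_{>k}) = \ell(w_{>k}) + 1 > \ell(w_{>k})$; by the standard length criterion $\ell(s_i v) > \ell(v) \iff v^{-1}(\alpha_i) > 0$, applied with $v = w_{>k}$ and $i = i_k$, the root $\beta_k = w_{>k}^{-1}(\alpha_{i_k})$ is positive. Hence $\Sbar(x_w) = y_1 \cdots y_\ell$ is a single monomial in the generators $x_{ij}$ with $i < j$ and coefficient $+1$, which proves $\Sbar(x_w) \in \FK_n^+$.

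For the second assertion I would identify the $\beta_k$ with the standard enumeration of inversions. Passing to the reversed reduced expression $w^{-1} = s_{i_\ell} \cdots s_{i_1}$ and reindexing $j_p := i_{\ell+1-p}$, a direct substitution rewrites $\beta_k = s_{j_1} \cdots s_{j_{p-1}}(\alpha_{j_p})$ with $p = \ell + 1 - k$. By the standard fact that, for a reduced expression $v = s_{j_1} \cdots s_{j_m}$, the roots $\{s_{j_1} \cdots s_{j_{p-1}}(\alpha_{j_p})\}_{p=1}^m$ are distinct and exhaust $\{\alpha > 0 : v^{-1}(\alpha) < 0\}$, applied to $v = w^{-1}$, the collection $\{\beta_k\}_{k=1}^\ell$ is exactly $\{\alpha > 0 : w(\alpha) < 0\}$. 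Since $w(e_i - e_j) = e_{w(i)} - e_{w(j)}$ is negative precisely when $i < j$ and $w(i) > w(j)$, these roots correspond exactly to the generators $x_{ij}$ with $i < j$ and $w(i) > w(j)$, each occurring once; this is the asserted set of variables.

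The main obstacle is bookkeeping with conventions: the $\Sbar$-formula produces the ``tail'' products $w_{>k}^{-1}(\alpha_{i_k})$, whereas the textbook enumeration of inversions is stated for prefix products $s_{j_1} \cdots s_{j_{p-1}}(\alpha_{j_p})$, so one must carefully pass to the reversed word for $w^{-1}$ and verify that the resulting roots are the inversions of $w$ (rather than of $w^{-1}$). Once this reindexing is set up correctly, both parts follow from standard Coxeter-theoretic facts, and notably no relations of $\FK_n$ beyond $x_{ij} = -x_{ji}$ are needed, since $\Sbar(x_w)$ is literally a single monomial.
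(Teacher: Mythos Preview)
Your proof is correct and follows essentially the same approach as the paper: both compute $\Sbar(x_w)=y_1\cdots y_\ell$ with $y_k=w_{>k}^{-1}(x_{i_k,i_k+1})$ and use the length criterion $\ell(s_{i_k}w_{>k})>\ell(w_{>k})$ to see each $y_k\in\FK_n^+$. For the second assertion the arguments diverge slightly: the paper checks directly that $(a,b)$ is an inversion of $w_k=s_{i_k}w_{>k}$ and then observes that left-multiplying by $s_{i_1},\dots,s_{i_{k-1}}$ (which increases length) cannot undo this inversion, giving one inclusion, while you reindex to the reversed word for $w^{-1}$ and invoke the standard fact that the prefix roots $s_{j_1}\cdots s_{j_{p-1}}(\alpha_{j_p})$ enumerate exactly $\{\alpha>0:w(\alpha)<0\}$. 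Your route has the minor advantage of yielding distinctness and the ``precisely'' part of the statement in one stroke, whereas the paper's argument gives only the inclusion explicitly (equality following by the count $\ell=\ell(w)$ together with the implicit distinctness of the $\beta_k$).
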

\begin{proof}
	Let $w=s_{i_1} \dots s_{i_\ell}$ be a reduced expression, and let $w_k = s_{i_k} \cdots s_{i_\ell}$. Then by definition, $\Sbar(x_w) = y_1 \cdots y_\ell$, where $y_k = w_{k+1}^{-1}(x_{i_k,i_k+1})$. Since $w_{k+1}^{-1}s_{i_k} = w_k^{-1} \gtrdot w_{k+1}^{-1}$, we must have $a=w_{k+1}^{-1}(i_k)<w_{k+1}^{-1}(i_{k}+1)=b$, so $y_k=x_{ab} \in \FK_n^+$. Then $w_{k}(a) = i_k+1 > i_k = w_k(b)$. Since multiplying on the left by simple transpositions cannot remove inversions without decreasing length, it follows that $w(a) > w(b)$, as desired.
\end{proof}

Proposition~\ref{prop-sbarpos} implies that, in particular, $\Sbar(x_{w_0}) \in \FK_n^+$. In fact, we can say more about $\Sbar(x_{w_0})$. This will be related to the following definition from the theory of Coxeter groups (see \cite{BB, BFP}).
\begin{defn}
	A \emph{reflection ordering} for the transpositions $s_{ij} \in \SS_n$ is a total order $\prec$ such that for any $i<j<k$, $s_{ik}$ lies somewhere between $s_{ij}$ and $s_{jk}$.
\end{defn}
There is an equivalent formulation: $t_1\prec\dots\prec t_N$ is a reflection ordering if and only if there exists a reduced expression $w_0 = s_{i_1} \dotsm s_{i_N}$ such that $t_k = s_{i_N} \dotsm s_{i_{k+1}}s_{i_k}s_{i_{k+1}}\dots s_{i_N}$. In other words, if $s_{i_1j_1} \prec \dots \prec s_{i_Nj_N}$ is a reflection ordering, then $x_{i_1j_1} \cdots x_{i_Nj_N} = \Sbar(x_{w_0})$. We are now ready to prove the following proposition.

\begin{prop} \label{prop-w0}
	In $\FK_n$, $x_{w_0} = \Sbar(x_{w_0}) = x_{i_1j_1}\dotsm x_{i_Nj_N} \in \FK_n^+$, where $s_{i_1j_1}\prec \dots \prec s_{i_Nj_N}$ is any reflection ordering.
\end{prop}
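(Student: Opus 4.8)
The plan is to reduce everything to the single nontrivial equality $x_{w_0} = \Sbar(x_{w_0})$. The displayed identification $\Sbar(x_{w_0}) = x_{i_1j_1}\cdots x_{i_Nj_N}$ for a reflection ordering $s_{i_1j_1}\prec\cdots\prec s_{i_Nj_N}$ is exactly the equivalent formulation of reflection orderings recalled just above, applied to the explicit expansion $\Sbar(x_{i_1j_1}\cdots x_{i_\ell j_\ell}) = y_1\cdots y_\ell$; and $\Sbar(x_{w_0})\in\FK_n^+$ is then immediate from Proposition~\ref{prop-sbarpos}. Note also that $\Sbar$ preserves ordinary degree and sends $\SS_n$-degree $w_0$ to $w_0^{-1}=w_0$, so both $x_{w_0}$ and $\Sbar(x_{w_0})$ are homogeneous of degree $N=\binom n2$ and $\SS_n$-degree $w_0$. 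It remains to prove they coincide.

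My main tool will be the bilinear form together with Proposition~\ref{prop-pair1}. First I would record the scalar $\langle x_{w_0}, \Sbar(x_{w_0})\rangle$: by the adjointness of $\Sbar$ and $\rho$ (Proposition~\ref{prop-sbar}(e)) and the symmetry of the form, $\langle x_{w_0}, \Sbar(x_{w_0})\rangle = \langle x_{w_0}, \rho(x_{w_0})\rangle$, and since reversing a reduced word for the involution $w_0$ yields another reduced word for $w_0$ we have $\rho(x_{w_0}) = x_{w_0}$, whence $\langle x_{w_0}, \Sbar(x_{w_0})\rangle = \langle x_{w_0}, x_{w_0}\rangle = 1$ by the last sentence of Proposition~\ref{prop-pair1}. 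The elegant general observation I would exploit is that for \emph{any} positive monomial $P = x_{i_1j_1}\cdots x_{i_Nj_N}$ one has $\langle \Sbar(x_{w_0}), P\rangle = \langle x_{w_0}, \rho(P)\rangle$, and by Proposition~\ref{prop-pair1} the right side equals $1$ precisely when every prefix product $s_{i_1j_1}\cdots s_{i_mj_m}$ is reduced (has length $m$), while $\langle x_{w_0}, P\rangle$ equals $1$ precisely when every suffix product $s_{i_kj_k}\cdots s_{i_Nj_N}$ is reduced. These conditions are equivalent by the defining property $\ell(gw_0)=\ell(w_0g)=\ell(w_0)-\ell(g)$ of the longest element: if every prefix is reduced, then $s_{i_kj_k}\cdots s_{i_Nj_N} = (s_{i_1j_1}\cdots s_{i_{k-1}j_{k-1}})^{-1}w_0$ has length $N-(k-1)=N-k+1$, so every suffix is reduced as well, and symmetrically. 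Hence $x_{w_0}-\Sbar(x_{w_0})$ pairs to zero with every monomial.

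The remaining step, upgrading this orthogonality to an equality of elements, is where I expect the real difficulty, since the form on $\FK_n$ is not known to be nondegenerate in general. I would therefore instead establish that the homogeneous component of degree $N$ and $\SS_n$-degree $w_0$ is one-dimensional, spanned by $x_{w_0}$; granting this, $\Sbar(x_{w_0}) = c\,x_{w_0}$, and the scalar computation above (or simply that both elements lie in $\FK_n^+$ together with $\Sbar^2=\id$ from Proposition~\ref{prop-sbar}(b)) forces $c=1$. To prove one-dimensionality I would show that every positive monomial of this bidegree reduces in $\FK_n$ to $x_{w_0}$: the commutation relation $x_{ij}x_{kl}=x_{kl}x_{ij}$ and the Yang--Baxter relation $x_{ij}x_{ik}x_{jk}=x_{jk}x_{ik}x_{ij}$ of Proposition~\ref{prop-relations}(e) realize exactly the flips connecting any two reduced reflection factorizations of $w_0$, while non-reduced factorizations must collapse using $x_{ij}^2=0$ and the triangle relation. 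For $n=3$ this is precisely the fourfold identity $x_{12}x_{23}x_{12}=x_{23}x_{12}x_{23}=x_{12}x_{13}x_{23}=x_{23}x_{13}x_{12}$ of Proposition~\ref{prop-relations}(e), which identifies all four monomials of this bidegree with $x_{w_0}$. Carrying out the analogous reduction uniformly in $n$, most plausibly by induction using the multiplicativity $\Sbar(PQ)=s_Q^{-1}(\Sbar(P))\Sbar(Q)$ of Proposition~\ref{prop-sbar}(a) together with the reduced factorization $w_0 = w_0^{(n-1)}\cdot(s_{n-1}s_{n-2}\cdots s_1)$, is the crux of the argument.
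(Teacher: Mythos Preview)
Your pairing computation is correct and pleasant: $x_{w_0}-\Sbar(x_{w_0})$ does lie in the radical of $\langle\cdot,\cdot\rangle$. But as you yourself note, this is a dead end, since nondegeneracy of the form on $\FK_n$ is precisely the open question of whether $\FK_n$ is a Nichols algebra. You then pivot to proving that the bihomogeneous piece of degree $\binom n2$ and $\SS_n$-degree $w_0$ is one-dimensional, and you explicitly label this ``the crux of the argument'' without carrying it out. That is a genuine gap: the claim that \emph{every} monomial in that bidegree reduces to $x_{w_0}$ is at least as hard as the proposition itself, and for general $n$ it is not something you can read off from commutations and Yang--Baxter moves alone, since those only connect monomials coming from \emph{reduced} reflection factorizations of $w_0$. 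Handling the non-reduced ones (showing they vanish) is exactly where the difficulty hides, and it is tied to the same structural questions about $\FK_n$ that obstruct the nondegeneracy route.

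The paper sidesteps all of this. It never claims one-dimensionality; it simply fixes the specific reflection ordering
\[s_{12}\prec s_{13}\prec s_{23}\prec\cdots\prec s_{1n}\prec\cdots\prec s_{n-1,n},\]
uses the reduced factorization $w_0 = w_0^{(n-1)}\cdot s_{n-1}\cdots s_1$ (the very one you mention in your last sentence), and by a short direct manipulation with the triangle relation together with $x_{w_0^{(n-1)}}x_{i,i+1}=0$ for $i<n-1$ shows
\[x_{w_0^{(n-1)}}\cdot x_{1n}x_{2n}\cdots x_{n-1,n} \;=\; x_{w_0^{(n-1)}}\cdot x_{n-1,n}x_{n-2,n-1}\cdots x_{12} \;=\; x_{w_0}.\]
So the induction is applied directly to the identity $x_{w_0}=\Sbar(x_{w_0})$, not to any dimension count. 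Your instinct in the final sentence was pointing at exactly the right induction; the fix is to aim it at the equality itself rather than at the stronger one-dimensionality statement.
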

\begin{proof}
	It suffices to prove the claim for a fixed reflection ordering, say
	\[s_{12}\prec s_{13}\prec s_{23}\prec s_{14}\prec s_{24}\prec s_{34}\prec  \cdots \prec s_{1n} \prec \cdots \prec s_{n-1,n}.\]
	
	We induct on $n$. Let $w_0'$ be the longest element of $S_{n-1}$, so $w = w_0' \cdot s_{n-1}s_{n-2} \cdots s_1$ is a reduced factorization. By the inductive hypothesis, $x_{w_0'} = x_{12}x_{13}x_{23} \cdots x_{n-2,n-1} \in \FK_{n-1}$. Since $w_0'$ has maximum length in $S_{n-1}$, $x_{w_0'}x_{i,i+1} = 0$ for $i=1, \dots, n-2$. Then
	\begin{align*}
	x_{w_0'} \cdot x_{1n}x_{2n}x_{3n} \cdots x_{n-1,n} &= x_{w_0'} \cdot (-x_{12}x_{1n} + x_{2n}x_{12}) \cdot x_{3n}\cdots x_{n-1,n}\\
	&= x_{w_0'}\cdot x_{2n}x_{12} \cdot x_{3n}\cdots x_{n-1,n}\\
	&= x_{w_0'} \cdot x_{2n}x_{3n}\cdots x_{n-1,n} \cdot x_{12}\\
	&= x_{w_0'} \cdot (-x_{23}x_{2n}+x_{3n}x_{23})\cdots x_{n-1,n} \cdot x_{12}\\
	&= x_{w_0'} \cdot x_{3n}\cdots x_{n-1,n} \cdot x_{23}x_{12}\\
	&= \cdots\\
	&= x_{w_0'} \cdot x_{n-1,n}x_{n-2,n-1} \cdots x_{23}x_{12}\\
	&= x_{w_0} \qedhere.
	\end{align*}
\end{proof}

We are now ready to prove our main theorem.

\begin{thm} \label{thm-main}
	For any $v,w \in \SS_n$, $x_{w/v} \in \FK_n^+$. 
\end{thm}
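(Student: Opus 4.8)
The plan is to convert $x_{w/v}$ into a single explicit expression built from the antipode variant $\Sbar$ and the right action $\nabla$, and then to isolate the positivity question to one clean step. First I would establish the closed form $x_{w/v} = \Sbar(x_{w_0 v})\,\nabla_{w_0 w}$. Starting from $\Delta(x_w) = \sum_{v'} x_{v'}\ot x_{w/v'}$ (Proposition~\ref{prop-skew}), the orthogonality $\langle x_{v^{-1}}, x_{v'}\rangle = \delta_{v,v'}$ from Proposition~\ref{prop-pair1}, and the coproduct description $\Delta_P(Q) = \sum \langle P, Q^i_{(1)}\rangle\, Q^i_{(2)}$, I get $x_{w/v} = \Delta_{v^{-1}}(x_w)$. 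Since $w_0 = w\cdot(w_0 w)^{-1}$ is a reduced factorization, Proposition~\ref{prop-nabla}(b) gives $x_w = (x_{w_0})\nabla_{w_0 w}$; because the $\Delta$- and $\nabla$-actions commute (cocommutativity), this yields $x_{w/v} = \big(\Delta_{v^{-1}}(x_{w_0})\big)\nabla_{w_0 w}$. Finally, using $x_{w_0} = \Sbar(x_{w_0})$ (Proposition~\ref{prop-w0}) and iterating Proposition~\ref{prop-sbar}(d), the operator $\Delta_{v^{-1}}$ passes through $\Sbar$ and becomes $\nabla_v$, so $\Delta_{v^{-1}}(x_{w_0}) = \Sbar\big((x_{w_0})\nabla_v\big) = \Sbar(x_{w_0 v})$ by Proposition~\ref{prop-nabla}(b). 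By Proposition~\ref{prop-sbarpos} this is a single positive monomial.

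This reduces the theorem to showing that applying $\nabla_{w_0 w}$ to the positive monomial $M := \Sbar(x_{w_0 v})$ lands in $\FK_n^+$. From the coproduct formula $(M)\nabla_{w_0 w} = \sum_i M^i_{(1)}\,\langle M^i_{(2)}, x_{w_0 w}\rangle$ together with the observation that every first tensor factor of $\Delta$ of a monomial is a positive sub-monomial, the output is automatically a combination of positive monomials. The entire difficulty is therefore concentrated in the scalars $\langle M^i_{(2)}, x_{w_0 w}\rangle$, which need not individually be nonnegative.

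This is the main obstacle, and it is genuine: $\nabla_{ij}$ does not preserve $\FK_n^+$ in general (for instance $(x_{23}x_{13})\nabla_{12} = -x_{13}$), so the negative contributions must be shown to cancel. The plan is to prove positivity by induction on $\ell(w_0 w)$, peeling one simple reflection off $w_0 w$ at a time, while maintaining the stronger invariant that the running element is a nonnegative combination of \emph{bi-positive} monomials, meaning monomials $N$ for which both $N$ and $\Sbar(N)$ are positive monomials. The base element $\Sbar(x_{w_0 v})$ is bi-positive since $\Sbar$ is an involution, and bi-positive monomials lie in $\FK_n^+$, so it would suffice to show that each $\nabla_{i,i+1}$ sends a nonnegative combination of bi-positive monomials to another such combination. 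I expect the hard step to be verifying this stability: concretely, one must check via the saturated-Bruhat-chain interpretation of Proposition~\ref{prop-pair1} and the covering description in Proposition~\ref{prop-nabla}(a) that each surviving coefficient is nonnegative, and one may need to refine the precise invariant so that the sign bookkeeping closes under the inductive step.
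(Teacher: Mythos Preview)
Your first two paragraphs reproduce the paper's setup exactly: you correctly derive
\[
x_{w/v}=\bigl(\Sbar(x_{w_0v})\bigr)\nabla_{w_0w}=\sum_i M^i_{(1)}\,\langle M^i_{(2)},x_{w_0w}\rangle,
\]
with each $M^i_{(1)}$ a positive sub-monomial of $\Sbar(x_{w_0v})$. The divergence is in how you handle the scalars $\langle M^i_{(2)},x_{w_0w}\rangle$.

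The paper does not induct on $\ell(w_0w)$ or introduce a ``bi-positive'' invariant. Instead it observes, via Proposition~\ref{prop-sbar}(c), that
\[
\sum_i M^i_{(2)}\ot M^i_{(1)}
=\taubar\circ\Delta\circ\Sbar(x_{w_0v})
=(\Sbar\ot\Sbar)\circ\Delta(x_{w_0v})
=\sum_u \Sbar(x_u)\ot\Sbar(x_{w_0v/u}),
\]
so each nonzero $M^i_{(2)}$ is $\Sbar(x_u)$ for some permutation $u$, hence a \emph{positive monomial} by Proposition~\ref{prop-sbarpos}. Then Proposition~\ref{prop-pair1} (with the symmetry of $\langle\cdot,\cdot\rangle$) gives $\langle M^i_{(2)},x_{w_0w}\rangle\in\{0,1\}$ in one stroke, and the theorem follows immediately.

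Your proposed induction is where the gap lies. The ``bi-positive'' invariant, as you have stated it, is not obviously preserved by $\nabla_{i,i+1}$: if $N=\Sbar(M)$ with $M$ positive, then Proposition~\ref{prop-sbar}(c) gives $\Sbar\bigl((N)\nabla_{i,i+1}\bigr)$ as a \emph{second} tensor factor of $\Delta(M)$, which is twisted and need not be positive. You acknowledge that the invariant ``may need to be refined'', but the refinement that actually works is precisely the observation above---that at the very first step the second tensor factors are $\Sbar(x_u)$ for permutations $u$---and once you know that, no induction is needed at all. So the missing idea is the single application of Proposition~\ref{prop-sbar}(c) together with Proposition~\ref{prop-skew}; with it, your third paragraph can be replaced by two lines.
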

\begin{proof}
	Since $\ell(w_0) = \ell(w) + \ell((w_0w)^{-1})$, by Proposition~\ref{prop-nabla}(b), $(x_{w_0})\nabla_{w_0w} = x_w$. Then by Propositions~\ref{prop-skew} and \ref{prop-pair1}, $\Delta_{v^{-1}}(x_w) = \sum_{v'} \langle x_{v^{-1}}, x_{v'} \rangle \cdot x_{w/v'} = x_{w/v}$. Thus
	\[x_{w/v} = \Delta_{v^{-1}}(x_{w_0})\nabla_{w_0w}.\]
	Now by Propositions~\ref{prop-w0}, \ref{prop-sbar}(d), and \ref{prop-nabla}(b), \[\Delta_{v^{-1}}(x_{w_0}) = \Delta_{v^{-1}}(\Sbar(x_{w_0})) = \Sbar((x_{w_0})\nabla_v) = \Sbar(x_{w_0v}).\] 
	Write $\Delta(\Sbar(x_{w_0v})) = \sum X^i_{(1)} \ot X^i_{(2)}$. Then since $\Sbar(x_{w_0v}) \in \FK_n^+$ by Proposition~\ref{prop-sbarpos}, we also have $X^i_{(1)} \in \FK_n^+$. Moreover, by Propositions~\ref{prop-sbar}(c) and \ref{prop-skew},
	\[\sum X^i_{(2)} \ot X^i_{(1)} = \taubar \circ \Delta \circ \Sbar(x_{w_0v}) = (\Sbar \ot \Sbar) \circ \Delta(x_{w_0v}) = \sum_u \Sbar(x_u) \ot \Sbar(x_{w_0v/u}).\]
	Hence if $X^i_{(2)}$ is nonzero, then it equals $\Sbar(x_u)$ for some $u \in \SS_n$ and therefore lies in $\FK_n^+$. Then
	\[x_{w/v} = (\Sbar(x_{w_0v}))\nabla_{w_0w} = \sum X^i_{(1)} \cdot \langle X^i_{(2)}, x_{w_0w} \rangle.\]
	Since $\langle X^i_{(2)}, x_{w_0w} \rangle$ is either 0 or 1 by Proposition~\ref{prop-pair1}, $x_{w/v}$ lies in $\FK_n^+$, as desired.
\end{proof}

Tracing through the proof of Theorem~\ref{thm-main}, we can write down an explicit expression for $x_{w/v}$.
\begin{cor} \label{cor-explicit}
	Let $v,w \in \SS_n$. Choose any reduced expression $w_0v = s_{i_1} \cdots s_{i_\ell}$, and let $w_k = s_{i_k} \cdots s_{i_\ell}$. Define $y_k = w_{k+1}^{-1}(x_{i_k,i_k+1}) \in \FK_n^+$. Then 
	\[x_{w/v} = \sum_J \prod_{k \notin J} y_k,\]
	where $J \subset \{1, \dots, \ell\}$ ranges over all subsets such that $\prod_{k \in J} s_{i_k}$ is a reduced expression for $w_0w$.
\end{cor}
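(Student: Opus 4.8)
The plan is to trace the proof of Theorem~\ref{thm-main} one step more finely, keeping track of individual monomials instead of the grouped terms. The two ingredients I would carry over from that proof are the identity $x_{w/v} = (\Sbar(x_{w_0v}))\nabla_{w_0w}$, and the explicit form of $\Sbar(x_{w_0v})$: for the chosen reduced expression $w_0v = s_{i_1}\cdots s_{i_\ell}$, the formula for $\Sbar$ together with Proposition~\ref{prop-sbarpos} gives $\Sbar(x_{w_0v}) = y_1\cdots y_\ell$ with each $y_k = w_{k+1}^{-1}(x_{i_k,i_k+1}) \in \FK_n^+$. Thus $x_{w/v}$ is obtained by applying the right action $\nabla_{w_0w}$ to the explicit positive monomial $y_1\cdots y_\ell$, and the entire task reduces to expanding this action.

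First I would expand using $(M)\nabla_P = \sum M^i_{(1)}\langle M^i_{(2)}, P\rangle$ with $M = \Sbar(x_{w_0v}) = y_1\cdots y_\ell$ and $P = x_{w_0w}$. Because $M$ is a monomial, every term appearing in the first tensor factor of $\Delta(M)$ is a subword $\prod_{k\notin J}y_k$ for some $J \subseteq \{1,\dots,\ell\}$ (the remark following Proposition~\ref{prop-skew}), so
\[x_{w/v} = \sum_J \Big(\prod_{k\notin J}y_k\Big)\,\langle M^{(2)}_J, x_{w_0w}\rangle,\]
where $M^{(2)}_J$ is the braided (hence twisted) second tensor factor matched to the complement subword. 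Since each $y_k \in \FK_n^+$, every surviving summand already lies in $\FK_n^+$, and it remains only to decide for which $J$ the scalar $\langle M^{(2)}_J, x_{w_0w}\rangle$ equals $1$ (it is always $0$ or $1$ by Proposition~\ref{prop-pair1}).

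The heart of the argument is to identify $M^{(2)}_J$. I would invoke Proposition~\ref{prop-sbar}(c) in the sharp, relation-free form noted in the remark after its proof: the $2^\ell$ terms of $\Delta\circ\Sbar(x_{w_0v})$ agree term by term with those of $\taubar\circ(\Sbar\ot\Sbar)\circ\Delta(x_{w_0v})$. In $\Delta(x_{w_0v})$ each term comes from a subword of the fixed reduced word: its first tensor factor is $\prod_{k\in J}x_{i_k}$, which equals $x_u$ precisely when $u = \prod_{k\in J}s_{i_k}$ is reduced. Applying $\taubar\circ(\Sbar\ot\Sbar)$ swaps factors and carries this into the second slot as $\Sbar(x_u)$, so matching terms gives $M^{(2)}_J = \Sbar(x_u)$ with $u = \prod_{k\in J}s_{i_k}$ (and $M^{(2)}_J = 0$ when the product is not reduced). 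Finally, Proposition~\ref{prop-sbar}(e) gives $\langle \Sbar(x_u), x_{w_0w}\rangle = \langle x_u, \rho(x_{w_0w})\rangle = \langle x_u, x_{(w_0w)^{-1}}\rangle$, which by the ``in particular'' clause of Proposition~\ref{prop-pair1} equals $1$ iff $u = w_0w$. Substituting yields $x_{w/v} = \sum_J \prod_{k\notin J}y_k$ over exactly those $J$ for which $\prod_{k\in J}s_{i_k}$ is a reduced expression for $w_0w$, which is the claim.

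The step I expect to be the main obstacle is precisely this identification of the twisted second tensor factor $M^{(2)}_J$. The subtlety is that the condition in the statement is phrased in terms of the original simple reflections $s_{i_k}$ of the chosen reduced word for $w_0v$, whereas the $\SS_n$-degrees of the factors $y_k$ are the conjugated reflections $s_{y_k} = w_{k+1}^{-1}s_{i_k}w_{k+1}$. The relation-free version of Proposition~\ref{prop-sbar}(c) is exactly what ``un-twists'' the braiding and transfers the reduced-word condition back onto the $s_{i_k}$; making the bijection between the index set $J$ on the $\Sbar(x_{w_0v})$ side and the subword $\prod_{k\in J}s_{i_k}$ on the $x_{w_0v}$ side explicit — in particular verifying that the position set is preserved rather than complemented (which one can confirm on a small example such as $w = s_1s_2$, $v = s_1$ in $\SS_3$) — is the one piece requiring genuine care.
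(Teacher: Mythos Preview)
Your argument is correct and essentially identical to the paper's own proof: both start from $x_{w/v}=(\Sbar(x_{w_0v}))\nabla_{w_0w}=(y_1\cdots y_\ell)\nabla_{w_0w}$, use the relation-free version of Proposition~\ref{prop-sbar}(c) to identify the second tensor factor in $\Delta(y_1\cdots y_\ell)$ indexed by $J$ with $\Sbar(\prod_{k\in J}x_{i_k,i_k+1})$, and then apply Proposition~\ref{prop-sbar}(e) together with Proposition~\ref{prop-pair1} to see that the pairing is $1$ exactly when $\prod_{k\in J}s_{i_k}$ is reduced for $w_0w$. Your discussion of the index-set matching (that $J$ is preserved rather than complemented) is more explicit than the paper's, which simply cites ``the proof of Theorem~\ref{thm-main}'' for the displayed formula for $\Delta(y_1\cdots y_\ell)$.
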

\begin{proof}
	As in Theorem~\ref{thm-main}, $x_{w/v} = (\Sbar(x_{w_0v}))\nabla_{w_0w} = (y_1 \cdots y_\ell) \nabla_{w_0w}$. By the proof of Theorem~\ref{thm-main}, 
	\[\Delta(y_1 \cdots y_\ell) = \sum_{J \subset \{1, \dots, \ell\}}\left(\left(\textstyle\prod_{k \notin J} y_k\right) \ot \Sbar\left(\textstyle\prod_{k \in J} x_{i_k, i_k+1}\right)\right).\]
	By Proposition~\ref{prop-sbar}(e) $\langle \Sbar(x_u), x_{w_0w} \rangle = \langle x_u, x_{w^{-1}w_0} \rangle$, which by Proposition~\ref{prop-pair1} equals 1 if $u = w_0w$ and 0 otherwise. The result follows.
\end{proof}

\begin{ex}
	Let $w = s_2s_1s_3s_2 \in \SS_4$ and $v = s_2$. One reduced expression for $w_0v$ is $s_3s_2s_1s_2s_3$. Then $\Sbar(x_{w_0v}) = x_{13}x_{12}x_{14}x_{24}x_{34}$. Since $w_0w = s_1s_3 = s_3s_1$, there are two reduced subexpressions for $w_0w$ in $w_0v$, namely for $J=\{1,3\}$ and $J=\{3,5\}$. Removing the corresponding variables from $\Sbar(x_{w_0v})$ gives 
	\[x_{w/v} = x_{12}x_{24}x_{34} + x_{13}x_{12}x_{24}.\]
	Compare this calculation to Example~\ref{ex-skew2}.
\end{ex}

Each term in the expansion of $x_{w/v}$ corresponds to a reduced subword for $w_0w$ lying inside a reduced expression for $w_0v$. For more information about these subwords, see \cite{KM}.

Interestingly, Corollary~\ref{cor-explicit} implies that $x_{w/v}$ has an expression in which no variable is repeated in any monomial.

One can also use Theorem~\ref{thm-main} to give a positive recurrence for $x_{w/v}$.
\begin{cor} \label{cor-recurrence}
	Let $v,w \in \SS_n$, and suppose $a=v^{-1}(i)<v^{-1}(i+1)=b$. Let $v' = s_i v$ and $w' = s_iw$. If $w \lessdot w'$, then $x_{w/v} = x_{ab}x_{w/v'} + x_{w'/v'}$; otherwise $x_{w/v} = x_{ab}x_{w/v'}$.
\end{cor}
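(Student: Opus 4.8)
The plan is to deduce both cases of the recurrence from a single computation: expanding the coproduct of $x_{i,i+1}\cdot x_w$ and extracting one coefficient from its left tensor factor. First I record the combinatorics of the indices. Since $v \lessdot v' = s_iv$, a reduced word for $v'$ is obtained by prepending $s_i$ to one for $v$, so $x_{v'} = x_{i,i+1}\,x_v$; moreover $(v')^{-1} = v^{-1}s_i$, whence $(v')^{-1}(i) = v^{-1}(i+1) = b$ and $(v')^{-1}(i+1) = v^{-1}(i) = a$, with $a<b$. I will use Proposition~\ref{prop-skew} in the form $\Delta(x_w) = \sum_u x_u \ot x_{w/u}$ together with the orthogonality $\langle x_{u^{-1}}, x_{u'}\rangle = \delta_{u,u'}$ of Proposition~\ref{prop-pair1}. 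The latter lets me extract the $x_{v'}$-coefficient of the left factor of any element of $\FK_n\ot\FK_n$ by applying the linear map $(\langle x_{(v')^{-1}}, -\rangle\ot\id)$.

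Next I compute $\Delta(x_{i,i+1}x_w)=\Delta(x_{i,i+1})\Delta(x_w)$ using the braided product structure and Proposition~\ref{prop-skew}:
\[
\Delta(x_{i,i+1}x_w) = \sum_u x_{i,i+1}x_u \ot x_{w/u} \;+\; \sum_u x_u \ot s_u^{-1}(x_{i,i+1})\,x_{w/u}.
\]
Applying $(\langle x_{(v')^{-1}}, -\rangle\ot\id)$ to each sum, I expect the orthogonality together with the $\SS_n$-grading to isolate a single term in each. In the first sum the pairing is nonzero only when the left factor $x_{i,i+1}x_u$ has $\SS_n$-degree $v'$, i.e. $s_iu = v'$, so only $u=v$ survives; since $x_{i,i+1}x_v = x_{v'}$, this contributes exactly $x_{w/v}$. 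In the second sum the pairing forces $u=v'$, contributing $s_{v'}^{-1}(x_{i,i+1})\,x_{w/v'} = x_{(v')^{-1}(i),\,(v')^{-1}(i+1)}\,x_{w/v'} = x_{ba}\,x_{w/v'} = -x_{ab}\,x_{w/v'}$. Hence the extraction yields $x_{w/v} - x_{ab}\,x_{w/v'}$.

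It remains to evaluate the same extraction on $x_{i,i+1}x_w$ from the other side. If $w\lessdot w'=s_iw$, then $x_{i,i+1}x_w=x_{w'}$, so $\Delta(x_{i,i+1}x_w)=\sum_{u'}x_{u'}\ot x_{w'/u'}$ and the extraction returns $x_{w'/v'}$; if instead $s_iw<w$, then $s_i$ is a left descent of $w$, so $w$ has a reduced word beginning with $s_i$ and $x_{i,i+1}x_w=x_{i,i+1}^2(\cdots)=0$, giving $0$. Equating the two evaluations shows $x_{w/v}-x_{ab}\,x_{w/v'}$ equals $x_{w'/v'}$ or $0$ respectively, and rearranging produces the claimed recurrence in both cases.

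The main obstacle is the careful bookkeeping of the braiding. The twist $s_u^{-1}$ in the second tensor factor is precisely what produces the generator $x_{(v')^{-1}(i),(v')^{-1}(i+1)} = x_{ba}$, and one must track that the inherited index order is $b>a$, so that applying the antisymmetry $x_{ba}=-x_{ab}$ converts the lone negative contribution into the positive term $x_{ab}\,x_{w/v'}$ after moving it across the equation. The second point requiring care is verifying that the $\SS_n$-grading genuinely isolates a unique index $u$ in each sum; this collapse is what turns the two otherwise unwieldy sums into the clean two-term recurrence.
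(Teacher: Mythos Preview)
Your argument is correct and is genuinely different from the paper's. The paper derives the recurrence from Corollary~\ref{cor-explicit}: it picks a reduced expression for $w_0v$ beginning with $s_{n-i}$ (so that $y_1=x_{ab}$ and $y_2\cdots y_\ell=\Sbar(x_{w_0v'})$) and then splits the sum over $J$ according to whether $1\in J$, the two pieces being identified with $x_{ab}x_{w/v'}$ and $x_{w'/v'}$ via the explicit subword formula. Your route bypasses $\Sbar$ and Corollary~\ref{cor-explicit} entirely: you compute $\Delta(x_{i,i+1}x_w)$ once via the braided product and once via the nil-Coxeter identity $x_{i,i+1}x_w\in\{x_{w'},0\}$, then extract the $x_{v'}$-component of the left tensor factor using only Proposition~\ref{prop-skew} and Proposition~\ref{prop-pair1}. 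This is more elementary in the sense that it proves the identity independently of the main positivity theorem, so combined with the evident positivity of the recurrence (since $a<b$) it furnishes a self-contained inductive proof of $x_{w/v}\in\FK_n^+$; the paper's proof, by contrast, ties the recurrence back to the explicit subword description and hence to the geometry of reduced expressions for $w_0v$ and $w_0w$.
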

\begin{proof}
	If $v^{-1}(i)<v^{-1}(i+1)$, then $(w_0v)^{-1}(n+1-i)<(w_0v^{-1})(n-i)$. Hence there exists a reduced expression for $w_0v$ that starts with $s_{n-i}$. Let us fix such a reduced expression and apply Corollary~\ref{cor-explicit}. In this case $y_1 = x_{ab}$, and $y_2 \cdots y_\ell = \Sbar(x_{s_{n-i}w_0v}) = \Sbar(x_{w_0v'})$. 
	
	Suppose $J \subset \{1, \dots, \ell\}$ gives a reduced subexpression for $w_0w$. If $1 \notin J$, then $J$ gives a reduced subexpression for $w_0w$ in a reduced expression for $w_0v'$. This then contributes $x_{ab}x_{w/v'}$ to $x_{w/v}$. If $1 \in J$, then we must have $w_0w' = s_{n-i}w_0w \lessdot w_0w$, or equivalently $w \lessdot w'$, and $J \backslash \{1\}$ must give a reduced subexpression for $w_0w'$ in a reduced expression for $w_0v'$. Hence this contributes $x_{w'/v'}$ to $x_{w/v}$ (provided $w \lessdot w'$). The result follows.
\end{proof}

Corollary~\ref{cor-recurrence} then gives an explicit positive recurrence that implies $x_{w/v} \in \FK_n^+$ (noting that if $\ell(v)=\ell(w)$, then $x_{w/v} = 1$ if $v=w$ and 0 otherwise).

\bibliography{skewdd}

\begin{thebibliography}{10}

\bibitem{AS}
Nicol{\'a}s Andruskiewitsch and Hans-J{\"u}rgen Schneider.
\newblock Pointed {H}opf algebras.
\newblock In {\em New directions in {H}opf algebras}, volume~43 of {\em Math.
  Sci. Res. Inst. Publ.}, pages 1--68. Cambridge Univ. Press, Cambridge, 2002.

\bibitem{BB}
Anders Bj{\"o}rner and Francesco Brenti.
\newblock {\em Combinatorics of {C}oxeter groups}, volume 231 of {\em Graduate
  Texts in Mathematics}.
\newblock Springer, New York, 2005.

\bibitem{BLM}
Jonah Blasiak, Ricky~Ini Liu, and Karola M{\'e}sz{\'a}ros.
\newblock {S}ubalgebras of the {F}omin-{K}irillov algebra.
\newblock {\em Preprint}, 2012.
\newblock {\tt arxiv:1310.4112}.

\bibitem{BFP}
Francesco Brenti, Sergey Fomin, and Alexander Postnikov.
\newblock Mixed {B}ruhat operators and {Y}ang-{B}axter equations for {W}eyl
  groups.
\newblock {\em Internat. Math. Res. Notices}, (8):419--441, 1999.

\bibitem{FK}
Sergey Fomin and Anatol~N. Kirillov.
\newblock Quadratic algebras, {D}unkl elements, and {S}chubert calculus.
\newblock In {\em Advances in geometry}, volume 172 of {\em Progr. Math.},
  pages 147--182. Birkh\"auser Boston, Boston, MA, 1999.

\bibitem{FP}
Sergey Fomin and Claudio Procesi.
\newblock Fibered quadratic {H}opf algebras related to {S}chubert calculus.
\newblock {\em J. Algebra}, 230(1):174--183, 2000.

\bibitem{FS}
Sergey Fomin and Richard~P. Stanley.
\newblock Schubert polynomials and the nil-{C}oxeter algebra.
\newblock {\em Adv. Math.}, 103(2):196--207, 1994.

\bibitem{Humphreys}
James~E. Humphreys.
\newblock {\em Reflection groups and {C}oxeter groups}, volume~29 of {\em
  Cambridge Studies in Advanced Mathematics}.
\newblock Cambridge University Press, Cambridge, 1990.

\bibitem{Kirillov}
Anatol~N. Kirillov.
\newblock Skew divided difference operators and {S}chubert polynomials.
\newblock {\em SIGMA Symmetry Integrability Geom. Methods Appl.}, 3:Paper 072,
  14, 2007.

\bibitem{KM}
Allen Knutson and Ezra Miller.
\newblock Subword complexes in {C}oxeter groups.
\newblock {\em Adv. Math.}, 184(1):161--176, 2004.

\bibitem{Macdonald}
I.~G. Macdonald.
\newblock Schubert polynomials.
\newblock In {\em Surveys in combinatorics, 1991 ({G}uildford, 1991)}, volume
  166 of {\em London Math. Soc. Lecture Note Ser.}, pages 73--99. Cambridge
  Univ. Press, Cambridge, 1991.

\bibitem{MS}
Alexander Milinski and Hans-J{\"u}rgen Schneider.
\newblock Pointed indecomposable {H}opf algebras over {C}oxeter groups.
\newblock In {\em New trends in {H}opf algebra theory ({L}a {F}alda, 1999)},
  volume 267 of {\em Contemp. Math.}, pages 215--236. Amer. Math. Soc.,
  Providence, RI, 2000.

\end{thebibliography}
\bibliographystyle{plain}
\end{document}